\documentclass{article}
\usepackage{graphicx} 

\usepackage[utf8]{inputenc}

\usepackage{amssymb}
\usepackage{latexsym}
\usepackage{exscale}

\usepackage[dvipsnames]{xcolor}

\usepackage{tikz}
\usetikzlibrary{trees}

\usepackage{amssymb, amsmath}
\usepackage{enumerate}
\usepackage{enumerate}
\usepackage[top=1in,left=1in,right=1in,bottom=1in]{geometry}
         \usepackage{ulem}
          \usepackage{xspace}
          \usepackage{cancel}

\usepackage{amssymb, amsmath}
\usepackage{enumerate}
\usepackage{enumerate}
\usepackage[top=1in,left=1in,right=1in,bottom=1in]{geometry}
\usepackage{ulem}

 \newcommand{\je}[1]{{\color{red}{#1}}}

\usepackage{verbatim}

\def \beq{\begin{equation}}
\def \eeq{\end{equation}}

\renewcommand{\rq}[1]{(\ref{#1})}

\newtheorem{prop}{Proposition}
\newtheorem{thm}{Theorem}
\newtheorem{cor}{Corollary}

\newcommand{\bR}{{ \mathbb R  }}
\newcommand{\bC}{\Bbb C}
\newcommand{\bZ}{\Bbb Z}
\newcommand{\bQ}{\Bbb Q}
\newcommand{\bN}{\mathbb{N}}
\newcommand{\bK}{\Bbb K}

\newcommand{\la}{\mbox{$\lambda$}}

\newcommand{\pa }{\partial }
\newcommand{\f}{\varphi}

\newcommand{\ep}{\epsilon}

\newcommand{\al}{\alpha }

\newcommand{\ga}{\gamma }

\newcommand{\La}{\Lambda }

\newcommand{\om}{{\omega}}

\def\<{\langle} \def\>{\rangle}

\title{Boundary null-controllability for the beam equation with classical
structural damping.}

\author{Sergei Avdonin and Julian Edward\footnote{Corresponding Author} }
\date{April 2026}

\begin{document}
\maketitle
{\bf Author affiliations}

Sergei Avdonin: Department of Mathematics and Statistics, University of Alaska Fairbanks, Fairbanks, 99775, Alaska, USA

Julian Edward: Department of Mathematics and Statistics, Florida International University, Miami, FLorida 33199, USA
            

\vspace{.5in}
             
\begin{abstract}
Let $\Delta$ be the  Dirichlet Laplacian on the interval $(0,\pi)$, and let $T>0$. 
We prove a well-posedness results for the structurally damped beam equation
$$u_{tt}+\Delta^2 u-\rho \Delta u_t=0, x\in (0,\pi),t>0$$
with various boundary conditions including
$$
u(0,t)=u_{xx}(0,t)=0; u(\pi,t)=f(t),u_{xx}(\pi,t)=0,
$$
and $f\in H_0^2(0,T)$
and appropriate initial conditions. 
Viewing $f$ as a control, we prove null controllability for all $\rho \leq 2$. For $\rho >2$, we show null controllability for arbitrary $T>0$ holds for almost all $\rho$, but fails for a dense subset of $(2,\infty)$.

An analagous result is proven for Neumann control. 
\end{abstract}

\noindent{\bf Keywords: Euler-Bernouilli beam; boundary control; well-posedness.} 

\section{Introduction}

Let $\Delta$ be the  Laplacian: $\Delta =\pa_x^2$ on the interval $(0,\pi)$. 
We will denote the Dirichlet Laplacian by $A_D=-\Delta$ 
with operator domain either $H^2(0,\pi)\cap H^1_0(0,\pi)$, 
and the Neumann Laplacian by $A_N=-\Delta$ with operator domain 
$\{ u\in H^2(0,\pi ): u'(0)=u'(\pi)=0\}.$
It is well known that $A_D,A_N$ are self-adjoint with non-negative spectra. In what follows, we will denote by $A_j$
the operator $A_D$ or $A_N$ when there is no need for distinction. Denote by $X_j^p$ the operator domain of $A_j^{p/2}$.

We will study boundary control problems for the equation
\beq 
u_{tt}+A_j^2 u+\rho A_j u_t=0,\ x\in (0,\pi), \ t>0, \ j= \mbox{D or N.}\label{beam0}
\eeq
with  positive constant $\rho .$
 Throughout this paper, controllability will always mean
the ability of steering any initial state $(u(x,0),u_t(x,0))$ to zero over a finite time by some appropriate
input function $f$ (i.e. exact controllability to zero or null controllability). 
The boundary controls will be either 
\begin{eqnarray}
u(0,t)=u_{xx}(0,t) & = & 0,\nonumber \\
u(\pi,t) & = & f(t),\nonumber\\ 
u_{xx}(\pi,t) & = & 0,\label{dir}
\end{eqnarray}
for the Dirichlet Laplacian, or
\begin{eqnarray}
u_x(0,t) =
u_x(\pi,t) & = & g(t),\nonumber\\ 
u_{xxx}(\pi,t)=u_{xxx}(0,t) & = & 0,\label{neu}
\end{eqnarray}
for the Neumann Laplacian. 

The term $A_j u_t$  models a specific dissipative effect, known as structural damping. To the best of our knowledge, this was introduced in \cite{CR}:
“The basic property of structural damping, which is said to be consistent with empirical studies,
is that the amplitudes of the normal modes of vibration are attenuated at rates which are proportional to the oscillation frequencies.” This model was also studied under the name “proportional
damping” (cf. \cite{Ba}). In this paper, we refer to the damping term $A_j u_t$ as ``classical" structural damping, but one can also consider 
$A_j^\al u_t$ for $\al \in (0,2]$.
The  case $\al=2$ is known as “Kelvin–Voigt” damping. For a distributive control
and the damping term being $A^{\al}u_t$,  $\al \in (0,2]$, this is the first class of parabolic-like control models considered
in \cite{LT},\cite{T2}, see also \cite{AL}. The case of boundary control for $\al <1$, where there are significant challenges to proving well-posedness, will be considered in upcoming work, \cite{AEI2} and \cite{AEI3}, also see \cite{E2}.
For boundary controllability in the undamped case in small time, $\rho =0$, see \cite{Z}.

\subsection{Main Results}

Consider the system 
\begin{eqnarray}
u_{tt}+A_D^2 u+\rho A_D u_t& = & 0,\label{beamD}\\
u(0,t)=u_{xx}(0,t) & = & 0,\label{bc19}\\
u(\pi,t) & = & f(t),\label{bc29}\\ 
u_{xx}(\pi,t) & = & 0,\label{bc39}\\
u(x,0)=u^0(x),\ u_t(x,0)& = & u^1(x).\label{initD}
\end{eqnarray}
The well-posedness and regularity of this system,  was considered by Triggiani for general bounded Euclidean domains, see \cite{T1}. Restricted to  our one dimensional case, his results show that $f\in L^2(0,T)$ would imply $u_t\in L^2(0,T;X^{-3/2-\ep})$ for any $\ep >0$. However, to state exact controllability results, we require $u_t$ to be continuous in $t$.  

Define $H_*^2(0,T)=\{ f\in H^2(0,T):\ f(0)=f'(0)=0\}$.
We prove the following: 
\begin{prop}
\label{wpD}
Let $T>0$ and $f\in H_*^2(0,T)$.
Let $u$ solve \rq{beamD}-\rq{initD}. 
Suppose $(u^0,u^1)\in X_D^3\times X^1_D$. 
Then 
$$
u(x,t)=\frac{x}{\pi}+v(x,t),
$$
with 
$$
v\in C(0,T;X^3_D)\cap C^1(0,T;X^1_D).
$$
Thus 
$$
u\in  C^1(0,T;X^0_D).
$$
\end{prop}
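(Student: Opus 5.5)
The plan is to lift the boundary datum and reduce to an inhomogeneous problem with homogeneous boundary conditions, then estimate the result mode by mode in the eigenbasis of $A_D$. I read the decomposition in the statement as $u(x,t)=f(t)\frac{x}{\pi}+v(x,t)$; the factor $f(t)$ is needed for the boundary condition \rq{bc29}.

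\emph{Step 1: the lift.} Since $\pa_x^2(x/\pi)=0$, the function $f(t)x/\pi$ satisfies \rq{bc19}, \rq{bc29}, \rq{bc39}, and $\Delta^2$ and $\Delta$ annihilate it. Hence $v$ solves
$$
v_{tt}+A_D^2 v+\rho A_D v_t=-f''(t)\frac{x}{\pi},
$$
with homogeneous conditions $v=v_{xx}=0$ at $x=0,\pi$, which is exactly the boundary condition for $A_D^2$. Because $f\in H_*^2(0,T)$ gives $f(0)=f'(0)=0$, the initial data are unchanged: $v(\cdot,0)=u^0\in X^3_D$ and $v_t(\cdot,0)=u^1\in X^1_D$. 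The source $F(t)=-f''(t)x/\pi$ lies in $L^2(0,T;X^0_D)$.

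\emph{Step 2: modal reduction and kernel bounds.} Expand in the eigenfunctions $\sin(nx)$ of $A_D$, with eigenvalues $n^2$. Writing $v=\sum v_n(t)\sin nx$, each coefficient satisfies
$$
v_n''+\rho n^2v_n'+n^4v_n=-c_nf''(t),
$$
where $c_n$ is the sine coefficient of $x/\pi$, so $|c_n|\le C/n$. The characteristic roots are $r_\pm=n^2\mu_\pm$ with $\mu_\pm=\frac{-\rho\pm\sqrt{\rho^2-4}}{2}$, and $\operatorname{Re}\mu_\pm<0$ for every $\rho>0$. The key estimate is, for the Duhamel kernel $K_n(t)=\frac{e^{r_+t}-e^{r_-t}}{r_+-r_-}$ (or $te^{-n^2t}$ when $\rho=2$),
$$
|K_n(t)|\le Cn^{-2}e^{-cn^2t},\qquad |K_n'(t)|\le Ce^{-cn^2t},
$$
with analogous bounds for the homogeneous fundamental solutions. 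The constants $C,c$ must not depend on $n$. When $\rho\neq 2$ this is immediate since $|r_+-r_-|\sim n^2$. When $\rho=2$ it follows from $te^{-n^2t}\le Cn^{-2}e^{-n^2t/2}$.

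\emph{Step 3: summation.} By the Duhamel formula and Cauchy--Schwarz, using $\int_0^t e^{-2cn^2(t-s)}\,ds\le Cn^{-2}$,
$$
n^3|v_n^{\rm inh}(t)|\le Cn|c_n|\,\|f''\|_{L^2(0,T)}\,n^{-1}\le Cn^{-1}\|f''\|_{L^2},
$$
and similarly $n|(v_n^{\rm inh})'(t)|\le Cn^{-1}\|f''\|_{L^2}$, uniformly in $t\in[0,T]$. These bounds are square summable. For the homogeneous part, the same kernel bounds show that the solution map is uniformly bounded on $X^3_D\times X^1_D$ in the norm $\sum\bigl(n^6|v_n|^2+n^2|v_n'|^2\bigr)$.

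Each mode is continuous in $t$, and the tails of both series are small uniformly in $t$ (uniform $\ell^2$ domination for the inhomogeneous part, and the tails of the initial data for the homogeneous part). Therefore $v\in C(0,T;X^3_D)\cap C^1(0,T;X^1_D)$. Finally, $f\in H^2(0,T)\subset C^1[0,T]$ gives $fx/\pi\in C^1(0,T;L^2)$, and together with $X^1_D\subset X^0_D$ this yields $u\in C^1(0,T;X^0_D)$.

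I expect the main obstacle to be Step 2, namely getting decay bounds for $K_n$ and $K_n'$ that are uniform in $n$ across the three regimes $\rho<2$ (complex roots), $\rho=2$ (double root) and $\rho>2$ (real roots). In particular, the factor $n^{-2}$ in $K_n$ must survive near the double root so that the $1/n$ decay of $c_n$ suffices for the $X^3_D$ norm.
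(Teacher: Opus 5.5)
Your proof is correct and has the same skeleton as the paper's. The paper also lifts with $U(x,t)=\frac{x}{\pi}f(t)$, so your reading of the decomposition with the factor $f(t)$ is the intended one. It then splits $v=v^0+v^f$ and solves the modal equations by variation of parameters, arriving at the Duhamel formulas \rq{vf2}, \rq{vf}.

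You differ from the paper in how you sum over $n$:
\begin{itemize}
\item \textbf{The paper's route.} It argues that each branch $\{\lambda_n^+\}$, $\{\lambda_n^-\}$ satisfies a gap condition, so $\{e^{\lambda_n^\pm t}\}$ is a Bessel sequence in $L^2(0,T)$. Combined with $|x_n|\asymp 1/n$ and $|\alpha_n|\asymp|\lambda_n^\pm|\asymp n^2$, this makes $n^3a_n(t)$ and $n a_n'(t)$ bounded multiples of Bessel coefficients of $f''$, hence in $\ell^2$. For $\rho=2$ it cites a Bessel property of $\{e^{-n^2t},te^{-n^2t}\}$ and leaves the details to the reader. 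Continuity in time is justified only by the remark that the Bessel constants depend continuously on $T$, and the homogeneous part is dismissed as easy.
\item \textbf{Your route.} You use the exponential decay $e^{-cn^2t}$ of the kernel and Cauchy--Schwarz mode by mode. This gives a per-mode bound $O(n^{-1})\|f''\|_{L^2}$, uniform in $t$, which is square summable because of the $1/n$ decay of $x_n$.
\end{itemize}

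What your version buys:
\begin{itemize}
\item It is self-contained and treats $\rho<2$, $\rho=2$ and $\rho>2$ with one argument.
\item Its constants are independent of $T$.
\item It gives an explicit uniform-tail argument for $C([0,T];X^3_D)\cap C^1([0,T];X^1_D)$, covering the homogeneous part as well.
\item It shows that no separation property of the exponents is needed for well-posedness. Since $\sum_n\|e^{\lambda_n^\pm t}\|^2_{L^2(0,T)}\lesssim\sum_n n^{-2}<\infty$, the family is Bessel for trivial Hilbert--Schmidt reasons.
\end{itemize}

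The obstacle you anticipated near the double root is not real. For fixed $\rho$ only uniformity in $n$ is needed. If you want bounds uniform in $\rho$ as well, the inequality $|e^a-e^b|\le|a-b|\sup_{[a,b]}|e^z|$ gives $|K_n(t)|\le t\,e^{-cn^2t}$ directly.

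The paper's Bessel-sequence phrasing has the merit of matching the exponential-family language it uses for the moment problem in the controllability part. For this proposition, though, your estimate is the more economical and more complete argument.
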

\begin{thm}\label{bcD}
\ 

A) Assume  $\rho \leq 2.$
Let $T>0.$ 
Given $(u^0,u^1)\in X^{3}_D\times X^{1}_D$, there exists $f\in H_0^2(0,T)$ such that the solution $u$ to the system  \rq{beamD}-\rq{initD} solves
$$u(x,T)=u_t(x,T)=0,
$$
and there exists a constant $C$ depending only on $\rho$ such that
\beq
\| f''\|_{L^2(0,T)}\leq Ce^{Q(T)}(\| u_0\|_{X^{3}_D}+\|u_1\|_{X^{1}_D}).\label{fe}
\eeq
Here $Q(T)\leq C'/T$, where the constant $C'$ depends on $\rho$.

\ 

B)  There exists a dense subset $S$ of $(2,\infty)$ such that $\rho \in S$ implies the system \rq{beamD}-\rq{initD}  is not approximately controllable for any time $T>0$.

\

C) For almost all $\rho \in (2,\infty)$,  the system  \rq{beamD}-\rq{initD} is null controllable for any time $T>0$, and \rq{fe} holds.

\

D) For any $T_*>0$, there exists $\rho >2$ such that the system  \rq{beamD}-\rq{initD} is null controllable for  $T>T_*$, and \rq{fe} holds,  but not for $T<T_*$.

\end{thm}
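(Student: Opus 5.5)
The plan is to reduce the control problem to a moment problem by spectral decomposition. The Dirichlet Laplacian $A_D$ has eigenfunctions $\phi_n=\sqrt{2/\pi}\sin nx$ with eigenvalues $n^2$. Projecting \rq{beamD}--\rq{bc39} onto $\phi_n$ and integrating by parts (the boundary term at $x=\pi$ carries $f$) gives a scalar ODE for each $u_n(t)=\<u(\cdot,t),\phi_n\>$:
$$u_n''+\rho n^2 u_n'+n^4 u_n = c_n\,(n^3 f(t)+\rho n f'(t)),\qquad c_n=\pm\sqrt{2/\pi},$$
or an equivalent forcing after using the lifting $x/\pi$ of Proposition~\ref{wpD}. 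The characteristic roots are $\la_n^\pm=n^2\mu_\pm$, where $\mu_\pm=\big(-\rho\pm\sqrt{\rho^2-4}\big)/2$.

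The condition $u_n(T)=u_n'(T)=0$ for all $n$ becomes, by variation of constants, the moment problem
$$\int_0^T f(t)e^{-\la_n^\pm (T-t)}\,dt=\alpha_n^\pm, \qquad n\ge1,$$
with $\alpha_n^\pm$ determined by $(u^0,u^1)$ and weighted by powers of $n$ (up to integrating by parts to move to $f''$). Since $u^0\in X^3_D$ and $u^1\in X^1_D$, the data are in weighted $\ell^2$ spaces, and I would check that the required weights are compatible with the exponential decay $e^{-|\Re\la_n|T}$ of the exponentials.

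\textbf{Part A ($\rho\le2$).} For $\rho<2$ the exponents $n^2\mu_\pm$ are complex conjugates lying on two rays in the left half-plane with $|\la_n|\sim n^2$, and they are separated. For $\rho=2$ there is a double root $-n^2$, so I would use the pair $e^{-n^2 t}$, $te^{-n^2 t}$. In both cases $\sum 1/|\la_n|<\infty$, and the sequence lies in a sector, so one can construct a biorthogonal family $\{q_n^\pm\}$ to the exponentials in $L^2(0,T)$. This is done by the Fattorini--Russell approach: build an entire function of exponential type with zeros at $\la_n^\pm$, of the form $\prod(1-z^2/\la_n^2)$, multiply by a Paley--Wiener mollifier, and apply Paley--Wiener. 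This yields the bound $\|q_n\|\le Ce^{\ep|\la_n|+C'/T}$. Setting $f=\sum\alpha_n q_n$, the series converges because $\alpha_n$ decays like $e^{-c n^2 T}$ relative to the weights, and this gives \rq{fe} with $Q(T)\le C'/T$. The regularity $f\in H_0^2$ follows by building biorthogonals for $f''$ instead, which is equivalent to dividing the moments by $\la_n^2$.

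\textbf{Parts B--D ($\rho>2$).} Here $\mu_\pm$ are distinct negative reals, so $\la_n^+=n^2\mu_+$ and $\la_m^-=m^2\mu_-$. The two families can coincide, with $n^2\mu_+=m^2\mu_-$, when $\mu_-/\mu_+=(n/m)^2$. The ratio $r(\rho)=\mu_-/\mu_+$ is a continuous, strictly monotone map from $(2,\infty)$ onto $(1,\infty)$. Take $S=r^{-1}(\{q^2:q\in\bQ,\ q>1\})$, which is dense. For $\rho\in S$, two distinct modes have the same exponent. The dual (adjoint) solution combining these two modes then has a vanishing observation for all $t$, so unique continuation fails, which gives the failure of approximate controllability in B.

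For C, I would show that for almost every $\rho$ the ratio $\sqrt{r}$ is badly approximable in the weak, almost-everywhere sense of Khinchin. That is, $|n-m\sqrt r|\ge c_\ep m^{-1-\ep}$, which gives the gap $|\la_n^+-\la_m^-|\ge c\,m^{-\ep}\cdot m$. This gap bounds the derivative of the generating function at its zeros from below, up to a subexponential factor $e^{\ep n}$, which can be absorbed by the $e^{-cn^2T}$ decay. Hence the biorthogonal estimates still hold for every $T$.

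For D, I would choose $\sqrt{r}$ to be a Liouville-type number whose convergents $n_k/m_k$ satisfy $|n_k-m_k\sqrt r|\approx e^{-\tau m_k^2}$, with $\tau$ tuned to $T_*$. The condition number of the near-coincident pair then behaves like $e^{T_*|\la|}$. The standard Fattorini--Russell theory for such sequences gives a control time equal to the condensation index: controllability holds for $T>T_*$, and the near-kernel pairs violate the observability inequality for $T<T_*$.

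I expect the main obstacle to be part D, and to a lesser extent the uniform biorthogonal estimates in C. These require a precise computation of the condensation index of the merged sequence $\{n^2\mu_+\}\cup\{m^2\mu_-\}$ in terms of the Diophantine properties of $\sqrt r$, and a matching lower bound for $T<T_*$.
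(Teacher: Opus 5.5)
Your route is the paper's: reduce to a moment problem through the modal ODEs (the paper lifts by $U=xf(t)/\pi$, so its unknown is $f''$ directly). For $\rho\le 2$ you use a biorthogonal family with subexponential growth, for B the exact coincidences $n^2\mu_+=m^2\mu_-$, and for C and D the condensation index of $\{n^2|\mu_+|\}\cup\{m^2|\mu_-|\}$. Your Khinchin and Liouville-type sketches stand in for the paper's citations of ABGT (Prop.~6.24, Cor.~6.25).

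The step that fails as written is in A: ``$f\in H_0^2$ follows by building biorthogonals for $f''$ instead, which is equivalent to dividing the moments by $\lambda_n^2$.'' For $f\in H^2(0,T)$ with $f(0)=f'(0)=0$,
\[
\int_0^T f''(t)e^{\lambda(T-t)}\,dt=f'(T)+\lambda f(T)+\lambda^2\int_0^T f(t)e^{\lambda(T-t)}\,dt .
\]
So if $h=f''$ is built from a family biorthogonal only to the exponentials, then $f'(T)=\int_0^T h$ and $f(T)=\int_0^T (T-s)h(s)\,ds$ are uncontrolled. Then $f\notin H_0^2(0,T)$, and the moments of $f$ are off by $f'(T)/\lambda^2+f(T)/\lambda$. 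These endpoint values are not optional. The reduction to the moment problem uses $U(\cdot,T)=U_t(\cdot,T)=0$. In fact $u(\cdot,T)=u_t(\cdot,T)=0$ forces $f(T)=f'(T)=0$: $v(T)\in X^3_D$ and $v_t(T)\in X^1_D$ vanish at $x=\pi$, while $U(\pi,T)=f(T)$ and $U_t(\pi,T)=f'(T)$.

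The fix is what the paper does. Append the two moments $\int_0^T f''=0$ and $\int_0^T sf''(s)\,ds=0$, see \rq{md3}--\rq{md4}. That is, solve the moment problem for $\{1,t\}\cup\{e^{\lambda_n^\pm t}\}$, a double exponent at $0$ (the paper's $\lambda_0=0$ in Section~\ref{app}), with zero data on $1,t$.
\begin{itemize}
\item In your entire-function construction for A, this is an extra factor $z^2$ in the generating function.
\item In C and D, where you rely on the ABGT bound, it is a rank-two correction of each $q_n$ by two fixed functions orthogonal to all the exponentials but not to $1,t$. These exist because the enlarged family is still minimal.
\end{itemize}
In both cases the estimates survive, but the step has to be there.

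Smaller points:
\begin{itemize}
\item The kernel is $e^{\lambda_n^\pm(T-t)}$, not $e^{-\lambda_n^\pm(T-t)}$.
\item In C, Khinchin gives $|\lambda_n^+-\lambda_m^-|=|\mu_+|\,|n-m\sqrt r|\,(n+m\sqrt r)\gtrsim m^{-\epsilon}$, not $m^{1-\epsilon}$. This is harmless, since $c(\Lambda)=0$ only needs $\ln(1/|E'(\varkappa_n)|)=o(\varkappa_n)$.
\item Two steps are not classical Fattorini--Russell theory, which needs a uniform gap: passing from lower bounds on $|E'|$ to upper bounds on the biorthogonals, and identifying the minimal time with $c(\Lambda)$ in D. Both come from the ABGT condensation-index theorem \rq{bio} that the paper invokes.
\end{itemize}
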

Here, the system would be approximately controllable in time $T$ if for all $(v^0,v^1)\in X^3_D\times X^1_D$, and all $\ep >0$, there exists $f\in H_0^2(0,T)$ such that 
$$
\| u(*,T)-v^0(*)\|_{X^3}+\|u_t(*,T)-v^1(*)\|_{X^1}<\ep .
$$

These results are all proven using the moment method. The striking transition from $\rho \leq 2$ to $\rho >2$ can be explained by associated frequency sets. For $\rho <2$, the frequencies are shown to satisfy the gap condition, whereas for $\rho > 2$ this need not be the case; parts C and D above are determined by the condensation index of the frequency sets, as studied in \cite{ABGT}.

We now consider the following system with Neumann control:
We discuss null-controllability for 
\begin{eqnarray}
u_{tt}+A_N^2 u+\rho A_N u_t& = & 0\label{beamn2}\\
u_x(0,t)=u_{x}(\pi ,t) & = & g(t)\label{bc1n2}\\
u_{xxx}(0,t)=u_{xxx}(\pi,t) & = & 0\label{bc3n2}\\
u(x,0)=u^0(x),\ u_t(x,0)& = & u^1(x).\label{initn2}
\end{eqnarray}

\begin{prop}\label{wpn}
Let $T>0$ and $g\in H_*^2(0,T)$.
Suppose $(u^0,u^1)\in X^4_N\times X^2_N$.  Then there exists a solution $u$ to our   system \rq{beamn2}-\rq{initn2}. Furthermore, 
$$
u(x,t)=xg(t)+v(x,t),
$$
with 
$$
v\in C(0,T;X^4_N)\cap C^1(0,T;X^2_N).
$$
Thus 
$$
u\in C^1(0,T;X^1_N).
$$
\end{prop}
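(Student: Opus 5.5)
The plan is to lift the boundary data and reduce to an inhomogeneous problem with homogeneous Neumann-type boundary conditions. Set $u=xg(t)+v$. Since $\pa_x(xg)=g$ and $\pa_x^3(xg)=0$, $v$ satisfies $v_x(0,t)=v_x(\pi,t)=0$ and $v_{xxx}(0,t)=v_{xxx}(\pi,t)=0$. These are exactly the boundary conditions of the domains of $A_N$ and $A_N^2$: $X^4_N$ consists of $H^4$ functions with $w'=w'''=0$ at both endpoints. Also $\Delta(xg)=0$ and $\Delta^2(xg)=0$. Hence $v$ solves
\begin{equation*}
v_{tt}+A_N^2v+\rho A_Nv_t=-x g''(t),\qquad v(0)=u^0-xg(0)=u^0,\quad v_t(0)=u^1-xg'(0)=u^1,
\end{equation*}
using $g(0)=g'(0)=0$ since $g\in H^2_*(0,T)$. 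Because $g\in H^2$, the forcing $F(t)=-xg''(t)$ lies in $L^2(0,T;L^2(0,\pi))$. The initial data are unchanged and lie in $X^4_N\times X^2_N$.

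Next, expand in the cosine eigenbasis $\phi_0=\pi^{-1/2}$, $\phi_n=\sqrt{2/\pi}\cos nx$ of $A_N$, with eigenvalues $n^2$. Writing $v=\sum v_n(t)\phi_n$, each mode satisfies
\begin{equation*}
v_n''+\rho n^2 v_n'+n^4v_n=-g''(t)c_n,\qquad c_n=\<x,\phi_n\>.
\end{equation*}
Here $c_n=O(n^{-2})$ for $n\ge1$, since $c_n$ is nonzero only for odd $n$, and then $c_n=-2\sqrt{2/\pi}\,n^{-2}$. The characteristic roots are $\lambda_n^\pm=n^2\bigl(-\rho\pm\sqrt{\rho^2-4}\bigr)/2$, so $\mathrm{Re}\,\lambda_n^\pm\le -c n^2$ for some $c>0$. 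In the case $\rho=2$ there is a double root, and one uses $te^{-n^2t}$ instead. The mode $n=0$ is the ODE $v_0''=-g''c_0$, which is trivially solvable and continuous.

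By Duhamel's formula, $v_n(t)=(\text{free part})+\int_0^t K_n(t-s)(-c_ng''(s))\,ds$. The kernel is $K_n(\tau)=(e^{\lambda_n^+\tau}-e^{\lambda_n^-\tau})/(\lambda_n^+-\lambda_n^-)$ when $\rho\neq2$, and $\tau e^{-n^2\tau}$ when $\rho=2$.

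The free part is handled by the analytic (indeed exponentially stable) semigroup generated by the damped system on $X^4_N\times X^2_N$. That semigroup is strongly continuous there, giving the $C(0,T;X^4_N)\cap C^1(0,T;X^2_N)$ contribution.

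The main work is the forced part. We need $\sum n^8|w_n(t)|^2$ and $\sum n^4|w_n'(t)|^2$ to converge uniformly in $t$, where $w_n$ denotes the Duhamel integral. Since $|K_n(\tau)|\lesssim n^{-2}e^{-cn^2\tau}$, Cauchy--Schwarz gives
\begin{equation*}
|w_n(t)|\le |c_n|\,\|K_n\|_{L^2}\,\|g''\|_{L^2}\lesssim n^{-2}\cdot n^{-2}\cdot n^{-1}\|g''\|_{L^2},
\end{equation*}
so $n^8|w_n|^2\lesssim n^{-2}$, which is summable. Similarly $|K_n'(\tau)|\lesssim e^{-cn^2\tau}$, so $|w_n'|\lesssim n^{-2}\cdot n^{-1}$ and $n^4|w_n'|^2\lesssim n^{-2}$, again summable. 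The bounds are uniform in $t$ and each term is continuous, so the Weierstrass M-test gives continuity in the required norms. This is the step to check most carefully; the decay $c_n=O(n^{-2})$ together with $\|K_n\|_{L^2}\sim n^{-3}$ is exactly what makes the series converge with room to spare.

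Finally, $u=xg+v$ with $xg\in C^1(0,T;H^1)$, and $X^2_N\subset X^1_N$. To conclude $u\in C^1(0,T;X^1_N)$ we note that $X^1_N$, the domain of $A_N^{1/2}$, is $H^1(0,\pi)$, with no boundary condition. Existence follows by defining $u$ through this series and checking that it satisfies the equation in the distributional/mild sense.
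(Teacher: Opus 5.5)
Your proof is correct. Its skeleton is the same as the paper's: lifting by $xg(t)$, cosine expansion, variation of parameters mode by mode, and the decay $x_n=O(n^{-2})$. The difference is in the key estimate for the forced part.

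The paper treats the moments $\int_0^t g''(s)e^{\lambda_n^{\pm}(t-s)}\,ds$ collectively. It invokes the Bessel property of $\{e^{\lambda_n^{\pm}t}\}$ in $L^2(0,T)$, justified in the Dirichlet section via the gap condition for $\rho<2$. This puts $(n^4a_n(t))$ and $(n^2a_n'(t))$ in $\ell^2$, and the paper obtains continuity in $t$ from the remark that the Bessel constants depend continuously on $T$.

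You instead bound each moment separately. Because $\mathrm{Re}\,\lambda_n^{\pm}\le -cn^2$, you have $\|e^{\lambda_n^{\pm}\,\cdot}\|_{L^2(0,\infty)}\lesssim n^{-1}$, which is already square-summable. So no separation or Bessel property is needed, and the M-test gives continuity of $t\mapsto(n^4w_n(t),n^2w_n'(t))$ in $\ell^2\times\ell^2$ explicitly.

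What this buys:
\begin{itemize}
\item Your argument is self-contained and uniform in $\rho>0$. It covers $\rho=2$ and $\rho>2$ with $r\in\mathbb{Q}$ (where $\lambda_m^+=\lambda_n^-$ can occur) without change, whereas the paper's Neumann section is written for $\rho<2$ only.
\item The paper's Bessel-sequence formulation is the one that would survive weaker damping, where $\sum_n|\mathrm{Re}\,\lambda_n|^{-1}$ diverges and termwise bounds no longer suffice.
\end{itemize}

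One small inaccuracy: the free evolution on $X^4_N\times X^2_N$ is not exponentially stable. The zero mode ($\omega_0=0$) evolves as $\langle u^0,\varphi_0\rangle+\langle u^1,\varphi_0\rangle t$. You only use strong continuity on $[0,T]$, so nothing in the argument is affected.
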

Remark: the requirement that the same control is applied at both $x=0$ and $x=\pi$ is consistent with the assumption made by Trigianni, \cite{T1}, that 
$\int_{\Gamma}\pa_\eta u =0$ for $\Gamma$ the boundary of a  domain in $\bR^n$. It is well known that this condition is necessary for unique solvability of the associated Neumann problem; in our case that solution is
 $xg(t)$.

Now suppose $(u^0,u^1)\in X^4_N\times X^2_N$, and suppose there exists a null-control $g\in H_0^2(0,T)$ for \rq{beamn2}-\rq{initn2}. If we multiply $u$ by 1, respectively $t$, and integrate by parts over $(0,\pi)\times (0,T)$, we get
$$
\int_0^\pi u^1(x)dx=0, \mbox{ respectively }\int_0^\pi u^0(x)dx=0.
$$
In view of these necessary conditions for null controllability, 
 we restrict $A_N$ to the subspace orthogonal to the constant functions, with associated Sobolev spaces 
$$X^p_{N,0}=\{ v(x)\in X^p_N: \int_0^\pi v dx=0\}.$$
Such restrictions of domain are not uncommon in the literature, as we indicate next section.

\begin{thm}\label{bcN}
Assume  $\rho < 2.$
Let $T>0.$ 
Given $(u^0,u^1)\in X^{4}_{N,0}\times X^{2}_{N,0}$, there exists $g\in H_0^2(0,T)$ such that the solution $u$ to the system  \rq{beamn2}-\rq{initn2} solves
$$u(x,T)=u_t(x,T)=0,
$$
and there exists a constant $C$ depending only on $\al,\rho$ such that
$$\| g''\|_{L^2(0,T)}\leq Ce^{Q(T)}(\| u_0\|_{X^{4}_{N,0}}+\|u_1\|_{X^{2}_{N,0}}).$$
Here $Q(T)\leq C'/T$, where the constant $C'$ depends on $\rho$.

\end{thm}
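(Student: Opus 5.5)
We will prove Theorem \ref{bcN} by the moment method, following the same route as for part A of Theorem \ref{bcD}.

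\textbf{Spectral reduction.} On $X^0_{N,0}$ the operator $A_N$ has eigenfunctions $\f_n(x)=\sqrt{2/\pi}\cos nx$ with eigenvalues $n^2$, $n\ge 1$. The constant mode is excluded precisely by the compatibility conditions derived above.

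Write $u=xg(t)+v$ as in Proposition \ref{wpn}, and set $u_n(t)=\<u(\cdot,t),\f_n\>$. Multiply the equation by $\f_n$ and integrate by parts twice for $A_N$ and four times for $A_N^2$. The boundary terms involve $u_x(0)=u_x(\pi)=g$ and $u_{xxx}=0$, and $\f_n'(0)=\f_n'(\pi)=0$. The result is a scalar ODE
$$
u_n''+n^4u_n+\rho n^2u_n'=c_n\big(n^2 g(t)+\rho g'(t)\big),\qquad c_n=\sqrt{2/\pi}\,\big((-1)^n-1\big),
$$
possibly with a slightly different form of the forcing. The exact form will be confirmed by the computation, using the regularity $g\in H^2_*$ to justify the integrations by parts. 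Modes with $c_n=0$ (even $n$) need separate attention; we expect them to decouple, since $u_x$ is antisymmetric under $x\mapsto\pi-x$ in the forcing. If they do decouple, the even-mode initial data must vanish, or we will check that the forcing there is really $\int_0^\pi x\,\f_n$-type. We will verify this carefully, because for even $n$ the forcing through $xg$ does not vanish.

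The characteristic roots are
$$
\la_n^\pm=n^2\,\frac{-\rho\pm\sqrt{\rho^2-4}}{2}=n^2\mu_\pm .
$$
For $\rho<2$ these are $n^2\big(-\rho/2\pm i\sqrt{1-\rho^2/4}\big)$: distinct, lying on two rays in the open left half plane, with $\mu_+\ne\mu_-$.

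\textbf{Moment problem.} Apply variation of constants and impose $u_n(T)=u_n'(T)=0$. Integrating by parts twice in $t$ with $g\in H^2_0$ moves everything onto $g''$, at the cost of a factor $\la^{-2}$. This gives the moment equations
$$
\int_0^T g''(t)\,e^{-\la_n^\pm t}\,dt=d_n^\pm ,
$$
where $d_n^\pm$ are explicit combinations of the Fourier coefficients of $u^0$ and $u^1$, weighted by powers of $n$ and $1/c_n$. The assumption $(u^0,u^1)\in X^4_{N,0}\times X^2_{N,0}$ should make $|d_n^\pm|\le C n^{k}|\,\cdot\,|$ with $\sum |d_n^\pm|^2 n^{-2k}$ controlled, for some fixed power $k$. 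Since $\la_n^+\ne\la_n^-$, the two equations for each $n$ are solvable for the two initial coefficients.

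\textbf{Biorthogonal family.} The set $\Lambda=\{-\la_n^\pm\}$ satisfies $\sum 1/|\la|<\infty$. It has a uniform gap, since $|\la_n^+-\la_n^-|=n^2|\mu_+-\mu_-|\ge c$ and different $n$ are separated. It lies in a sector $|\arg\la|\le\theta<\pi/2$. By the standard results, as in \cite{ABGT} or the Fattorini–Russell construction, there is a biorthogonal family $\{q_\la\}\subset L^2(0,T)$ with
$$
\|q_\la\|\le Ce^{\ep|\la|+C'/T}\quad\text{for every }\ep>0 .
$$
Because $\la_n^\pm\sim n^2$, we have $|e^{-\la T}|=e^{-\Re\la\,T}$ with $\Re\la\sim (\rho/2)n^2$, and this exponential decay absorbs both $e^{\ep|\la|}$ and the polynomial weights. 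Set
$$
g''=\sum d_\la q_\la .
$$
This series converges in $L^2$ and yields the estimate $e^{Q(T)}$ with $Q(T)\le C'/T$. One must also ensure that $g,g'$ vanish at $0$ and $T$. This is done by adding the four moments $\int g''=\int t\,g''=0$, i.e. adjoining $\la=0$ (with multiplicity two) to $\Lambda$. The enlarged set still has a gap and stays in the sector, so the biorthogonal family remains available.

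\textbf{Expected main obstacle.} The delicate steps are the exact bookkeeping of the forcing coefficients $c_n$, together with any modes where the control couples weakly or not at all (e.g. a parity issue), and the sharp biorthogonal norm bound. For the latter we would cite the sector/gap theorem used in the Dirichlet case of Theorem \ref{bcD}. If the even modes are genuinely uncontrolled, we would check whether the symmetric boundary condition actually acts on them through the $x g$ lifting term.
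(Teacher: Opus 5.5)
\textbf{The gap is the parity issue you flagged, and it cannot be closed: the even modes are genuinely uncontrolled, so the statement is false as written.}

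Your scalar ODE is correct. Integrating by parts gives $\int_0^\pi u_{xx}\cos nx\,dx=((-1)^n-1)g-n^2\int_0^\pi u\cos nx\,dx$, and similarly for $u_{xxxx}$ and $u_{xxt}$. For even $n\ge 2$ this gives $c_n=0$, i.e.
\[
u_n''+\rho n^2u_n'+n^4u_n=0\quad\text{for every control } g .
\]
By uniqueness for this ODE, $u_n(T)=u_n'(T)=0$ forces $u_n(0)=u_n'(0)=0$. For instance, $(u^0,u^1)=(\cos 2x,0)\in X^4_{N,0}\times X^2_{N,0}$ cannot be driven to rest by any $g$.

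Your hedge that ``for even $n$ the forcing through $xg$ does not vanish'' is mistaken. We have $\int_0^\pi x\cos nx\,dx=((-1)^n-1)/n^2$. After your integrations by parts in time, using $\la^2+\rho n^2\la+n^4=0$ for $\la=\la_n^\pm$, the forcing $c_n(n^2g+\rho g')$ produces at $t=T$ exactly the moments of $-\frac{c_n}{n^2}g''$, and $\frac{c_n}{n^2}=\langle x,\f_n\rangle$. So the lifting and your direct computation see the same coupling, and it vanishes for even $n$.

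Structurally, the reflection $x\mapsto\pi-x$ turns the control $g$ into $-g$. Hence the part of $u$ that is even about $x=\pi/2$ (constants and $\cos 2kx$) solves the uncontrolled Neumann problem. The mean-value obstruction noted before the theorem is just the case $k=0$. Consequently your $d_n^\pm$, which carry the factor $1/c_n$, are undefined for even $n$, and no biorthogonal family can repair this.

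The paper's own proof has the same hole. It asserts $|x_n|\asymp n^{-2}$ for all $n\neq0$, although its own expansion gives $x_n\propto((-1)^n-1)/n^2$, and the reduction \rq{z} divides by $x_n$.

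What the moment method (yours or the paper's) actually proves is null controllability for data whose even-index cosine coefficients vanish, i.e.\ data odd about $x=\pi/2$; these automatically have mean zero. On that subspace your route coincides with the paper's:
\begin{itemize}
\item the gap and sector conditions for $\{-\la_n^\pm\}_{n\ \mathrm{odd}}$;
\item the point $0$ adjoined with multiplicity two, to impose $\int_0^T g''=\int_0^T tg''=0$;
\item the decay $e^{-\rho n^2T/2}$ of the moments absorbing the growth of the biorthogonal family.
\end{itemize}

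One further point even there: a bound $Ce^{\ep|\la|+C'/T}$ with an unspecified $C=C_\ep$ does not give $Q(T)\le C'/T$. Convergence of the series forces $\ep<\rho T/2$, so $C_\ep$ depends on $T$ in an untracked way. You need the form $\|q_\la\|\le Ce^{C_3\sqrt{|\la|}+C/T}$. Then $\sup_n(C_3n-\rho n^2T/4)\le C_3^2/(\rho T)$ yields the claimed cost.
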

The assumption $\rho <2$ is made for brevity of exposition; analogues of all parts of Theorem \ref{bcD} also hold for $\rho \geq 2$.

Theorem\ref{bcN} is  proven using the moment method with  estimates on biorthogonal functions found in \cite{AIS}.

This paper is organized as follows. In the next subsection, we compare our results with the literature. In Section 2.1, we prove the well-posedness in the Dirichlet case, and also null controllability for $\rho \leq 2$. The case $\rho >2$, which requires slightly different methods, is presented in Section 2.2. Then the results for Neumann well-posedness and null controllability are presents in Section 2.3. In Section \ref{app}, we recall a result on biorthogonal families used for $\rho\leq 2$.
\subsection{Discussion}

In this section, we discuss the relevant literature.

Regarding the well-posedness and regularity for when the boundary control is $L^2$,
the reader is referred to Triggiani's work in \cite{T},  which also discusses both Dirichlet and  Neumann control, in $N$ dimensions, and also discussed moment control, $u_{xx}(\pi, t)=f(t)$, and shear control, $u_{xx}(\pi, t)=f(t)$.

Regarding  boundary controllability results for the structurally damped beam equation, we are unaware of any for Dirichlet or Neumann control as addressed in this paper. 
Assuming $\rho <2$, there are several results in \cite{CR}, but in most examples the uncontrolled end is clamped.
For more discussion of controllability for a clamped beam, see \cite{AIS}. 
Several papers, \cite{H},
\cite{mil},  \cite{AIS}, all consider boundary controllability in higher dimensional settings, but all assume $\rho <2,$ and don't discuss Dirichlet or Neumann control. Miller, in
 \cite{mil},   considered boundary control on product spaces which, when restricted to the one dimensional setting, would be equivalent to 
$$
u(0,t)=u_{xx}(0,t)=u(\pi,t)=0, u_{xx}(\pi,t)=g(t).
$$
It should be noted that the primary focus in this paper is in obtaining precise estimates on the control cost function $e^{Q(T)}$ as $T\to 0^+$.  
The Dirichlet and Neumann boundary control in the higher dimensional case will be addressed by us in upcoming work, \cite{AEI3}.

The proofs of our results use the moment method, in which a key element is the associated exponential family in $L^2(0,T)$ having a  biorthogonal family of functions satisfying an appropriate estimate.
The construction of biorthogonal families originated, to the best of our knowledge, with \cite{H}, with refinements in \cite{AIS}, then \cite{AEI}, then \cite{BM}, \cite{BBM}, \cite{B}. To prove Theorem \ref{bcD} parts C,D, we use a construction appearing in \cite{ABGT}.

For results on interior control, where the restriction $\al=1$ is easier to relax, the reader is referred to \cite{LT},\cite{AL},  \cite{mil}, and finally \cite{AEI}. In \cite{mit}, controllability is proven via a Carleman estimate, assuming $\al=1$.
 
\section{Materials and Methods}

Let $\{ \om_n ,\f_n: n\geq 1\}$ be the eigenvalues and corresponding normalized eigenectors for $A_D$. Then we define, for any real $p$,
$$\|u\|_{X^p_D}^2=\sum_n n^{2p}|\< u,\f_n\> |^2.$$
Let $\{ \om_n ,\f_n: n\geq 0\}$ be the eigenvalues and corresponding normalized eigenectors for $A_N$, so $\f_0=1/\sqrt{\pi}$ and $\om_0=0$.  Then we define, for $p\geq 0$, 
$$\|u\|_{X^p_N}^2=|\< u,\f_0\> |^2+\sum_{n=1}^\infty n^{2p}|\< u,\f_n\> |^2.$$

\subsection{Dirichlet case}

We discuss null-controllability for 
\begin{eqnarray}
u_{tt}+A_D^2 u+\rho A_D u_t& = & 0\label{beamd1}\\
u(0,t)=u_{xx}(0,t) & = & 0\label{bc1d}\\
u(\pi,t) & = & f(t)\label{bc2d}\\ 
u_{xx}(\pi,t) & = & 0\label{bc3d}\\
u(x,0)=u^0(x),\ u_t(x,0)& = & u^1(x).\label{initd}
\end{eqnarray}

{\bf Proof of Proposition \ref{wpD}}

	We begin by using a classical trick to convert the boundary control problem to an interior control problem. 
	Let $U(x,t)=\frac{x}{\pi}f(t)$, and $v(x,t)=u(x,t)-U(x,t).$
	Then $v$ satisfies
	\begin{eqnarray}
		v_{tt}+\Delta^2 v+\rho(-\Delta) v_t & = & -f''(t)x/\pi,\label{v1}\\
		v(0,t) =v(\pi ,t)=v_{xx}(0,t)=v_{xx}(\pi ,t)& = & 0\\
		v(x,0) & =& u^0(x)-\frac{x}{\pi}f(0)\\
		v_t(x,0) & = & u^1(x)-\frac{x}{\pi}f'(0).\label{v4}
	\end{eqnarray}
By hypothesis, we have 
	$$
	f(0)=f'(0)=0.
	$$

	We write $v=v^0+v^f$, where $v^0$ is the solution of 
	\rq{v1}-\rq{v4} with $f=0$, and $v^f$ is the solution with $u^0=u^1=0.$
		We now obtain a formulas for $v^f$.       
        First, we note the following Fourier expansion
		$$
		\frac{x}{\pi}=\sum_1 x_m\sqrt{\frac{2}{\pi}}\sin ( m x)
		=:\frac{1}{\pi}\sum_1\frac{2(-1)^{m+1}}{m}\sin ( m x).$$ 
		Set $v^f=\sum_{n=1}^\infty a_n(t)\f_n(x)$. Then 
		\rq{v1} implies
		$$
		\sum (a_n''+\rho a_n'n^{2}+a_nn^4   )\f_n(x)=-\sum f''(t)x_n\f_n(x),
		$$
		hence
		\beq
		a_n''+\rho n^{2}a_n'+n^4a_n=-f''(t)x_n,\ a_n(0)=a_n'(0)=0,\ \forall n\in \bN.\label{an}
		\eeq
		In what follows, it will be convenient to 
		set
\beq 
\beta_n=\frac{-\rho n^{2}}{2}, \ 
		\al_n=n^2\frac{\sqrt{4-\rho^2}}{2},\mbox{ and }\la_n^{\pm}:=\beta_n\pm i\al_n.\label{la}
\eeq
We now assume $\rho \neq 2$, indicating at the end the modifications necessary for $\rho =2$.
		We  find $v^f(x,t)=\sum_na_n(t)\f_n(x)$ using variation of parameters in \rq{an}.
		We have the  Wronskian equalling
		$-2i\al_ne^{-2\beta_nt}$.
		Hence
		\begin{eqnarray}
			a_n(t)
			& =& \frac{x_n}{2i\al_{n}}
			\int_0^t\big ( f''(s)\big )
			(e^{\la_n^+(t-s)}-e^{\la_n^-(t-s)})ds,
			\ n\in \bN, \label{vf2} \\
            a_n'(t)
			& =& \frac{x_n}{2i\al_{n}}
			\int_0^t\big ( f''(s)\big )
			(\la_n^+e^{\la_n^+(t-s)}-\la_n^-e^{\la_n^-(t-s)})ds,
			\ n\in \bN. \label{vf}
		\end{eqnarray}
Recall
$$
|x_n|\asymp 1/n,\ |\al_n| \asymp n^2, |\la_n |\asymp n^2.
$$
Also, the sets $\{ \la_n^+: n\in \bN\}, 
\{ \la_n^+: n\in \bN\}$ each satisfy the gap condition, i.e. there exists $\ep >0$ such that 
$$\inf_n |\la_n^j -\la_m^j|\geq \ep, j=\pm , \ m\neq n.$$ Hence 
$\{ e^{\la_n^+ t}: n\in \bN\}$,  $\{ e^{\la_n^- t}: n\in \bN\}$ each form Bessell sequences in $L^2(0,T)$, and it follows that their union
 $\{ e^{\la_n^\pm t}: n\in \bN\}$ forms a Bessel sequence in $L^2(0,T)$, and furthermore the constants defining the Bessel inequality will be vary continuously in $T$ for $T>0$. 
Hence, from \rq{vf2},\rq{vf}, we deduce
$$
v^f\in C(0,T;X_D^3)\cap C^1(0,T;X_D^1).
$$
It is easy to see that 
$$
v^0\in C(0,T;X_D^3)\cap C^1(0,T;X_D^1), \mbox{ and }U \in C^1(0,T;X_D^0),
$$
so  Proposition \ref{wpD}, for $\rho \neq 2$,  follows from 
$u^f=v^f+v^0+U$.

In the case $\rho =2$, in which case 
\beq
\la_n^+=\la_n^-=-n^2, \forall n,\label{la2}
\eeq
a calculation very similar to the one above gives 
\begin{eqnarray}
			a_n(t)
			& =& {x_n}
			\int_0^t f''(s)
			(s-t)e^{n^2(s-t)}ds,
			\ n\in \bN, \label{vf2'} \\
            a_n'(t)
			& =& {x_n}
			\int_0^t f''(s)
			\big (
            -n^2(s-t)e^{n^2(s-t)}-e^{n^2(s-t)}\big )ds,
			\ n\in \bN. \label{vf'}
		\end{eqnarray}
It is known, see for instance \cite{AEI2}, that the family $\{ e^{n^2t},te^{n^2t}\}$ forms a Bessel sequence in $L^2(0,T)$, the proposition can be proven by adapting the argument for $\rho \neq 2$; the details are left to the reader. 
$\Box$

{\bf Remark} it is clear from \rq{vf} that $a_n'(t)$ has the same regularity as $f'(t)$, illustrating why regular controls are needed for the null controllability problem.

\

{\bf Proof of Theorem \ref{bcD}, part A}.
 We present the proof for $\rho <2$, leaving the similar proof for $\rho =2$ to the reader. 
We adopt the notation of proof of the proposition. Since we will construct $f\in H_0^2(0,T)$, we have 
$f(T)=f'(T)=0,$
so $U(x,T)=U_t(x,T)=0$.
	Thus null controllability is for \rq{beamd1}-\rq{initd} is equivalent to the existence of $f$ such that 
	\beq 
	v^f(x,T)=-v^0(x,T); \ v_t^f(x,T)=-v^0_t(x,T) .
	\label{cont2}
	\eeq
We will find an associated moment problem. We need to express the ``free wave", $v^0$, as a Fourier series. Suppose for $j=0,1$, the initial conditions have Fourier expansion $u_j(x)=\sum_{n=1}^\infty u_n^j\f_n(x)$. Then
		from the above, 
		$$v^0(x,t)=\sum_n\big ( c^1_ne^{\la_n^+t}+c^2_ne^{\la_n^-t}\big )\f_n(x),$$
		with 
		$$c_n^2=\frac{\la_n^+u_n^0-u_n^1}{\la_n^+-\la_n^-},
		c^1_n=u_0^n-c^2_n.
		$$
We use the following notation: 
		\beq \label{tc1}
		-v^0(x,T)=
		\sum_{n\in \bN}\big (- c^1_ne^{\la_n^+T}
		-c_n^2e^{\la_n^-T}  \big )
		\f_{n}(x)=:\sum_{n\in \bN}\ga_n^1\f_{n}(x),
		\eeq 
		\beq \label{tc2}
		-v^0_t(x,T)=\sum_{n\in \bN}\big (- c^1_n\la_n^+e^{\la_n^+T}
		-c_n^2\la_n^-e^{\la_n^-T}  \big )
		\f_{n}(x)=:\sum_{n\in \bN}\ga_n^2\f_{n}(x).
		\eeq

By \rq{cont2}, \rq{vf2},\rq{vf}, \rq{tc1},\rq{tc2}, we get 
\begin{eqnarray}
\ga_n^1 & =& \frac{x_n}{2i\al_n}\int_0^T\big ( f''(s)\big )(e^{\la_n^+(T-s)}-e^{\la_n^-(T-s)})ds,\ n\in \bN, \label{md1'} \\
 \ga_n^2& =& \frac{x_n}{2i\al_n}
			\int_0^T\big ( f''(s)\big )
			(\la_n^+e^{\la_n^+(T-s)}-\la_n^-e^{\la_n^-(T-s)})ds,
			\ n\in \bN. \label{md2'}
		\end{eqnarray}
Setting
\beq
\zeta_n^1=-\frac{\la_n^+\gamma^1_n-\ga_n^2}{x_n}, \ 
\zeta_n^2=-\frac{\la_n^-\gamma^1_n-\ga_n^2}{x_n},\label{z}
\eeq
we rewrite \rq{md1'},\rq{md2'} as 
\begin{eqnarray}
\zeta_n^1 & =& \int_0^T g''(s) e^{\la_n^+(T-s)}ds,\ n\in \bN, \label{md1} \\
 \zeta_n^2& =& 
			\int_0^T g''(s)
			e^{\la_n^-(T-s)}ds,
			\ n\in \bN. \label{md2}
		\end{eqnarray}

Requiring $f(T)=f'(T)=0$ implies that $f''$ also satisfies
\begin{eqnarray}
0 &= & \int_0^T f''(s)ds\label{md3} \\
0 &= & \int_0^T sf''(s)ds.\label{md4} 
\end{eqnarray}	
Our moment problem is thus \rq{md1}-\rq{md4}.
We now solve for $f''$, hence $f$. 
Set $\zeta^j_0=0$ for $j=1,2$. Let $\{ g_{0,2}(t)\}\cup \{ g_m(t): m\in \bZ\}$ be the associated biorthogonal family. Also, write 
$$\zeta_k
=\left \{ 
\begin{array}{cc}
\zeta_k^1,& k>0,\\
\zeta_k^2, & k<0.
\end{array}
\right .
$$

Then formally, 
$$
f''(t)=\sum_{k\in \bK}\zeta_kg_k(t).
$$
It is straightforward to show $|\zeta_k|=O(e^{-\rho  Tn^2/2}).$ Combining with the estimate \rq{boe} in the Section \ref{app}, we see that the series above converges in $L^2(0,T)$.

\subsection{Dirichlet control in the case $\rho >2$.}\label{para1}
In this subsection we prove parts B, C, D of Theorem \ref{bcD}. 

Setting 
\beq 
r=(\rho+\sqrt{\rho^2-4})/2,  \label{rr}
\eeq
we have  
\beq
\la_m^+= -\frac{1}{r} m^{2},\ \la_n^-= -rn^{2},\label{bad}
\eeq
and $r>1.$
Arguing as in the previous subsection, proving null-controllability is equivalent to solving the moment problem \rq{md1}-\rq{md4}. 

We first prove part B of Theorem \ref{bcD}.
A simple algebra exercise shows  $r\notin \bQ$ is equivalent to the set $\{ \la_m^+,\la_n^-: n,m\in \bN\}$ being simple. In particular, if $r\in \bQ$, 
 there exist $m\neq n$ such that $\la^+_m=\la_n^-$, in which case our moment problem is overdetermined.
 It is then easy to deduce that our system is not even approximately controllable. Since $\bQ$ is dense in $\bR$, it is easy deduced from \rq{rr} that the associated $\rho$ are dense in $(2,\infty)$, proving part B.

 We now prove parts C, D, adapting arguments used in  \cite{ABGT}. This will involve a biorthogonal set for the exponential family
$$
\{ e^{-n^2t/r}, e^{-n^2tr}: n\in \bN\}
$$
in the interval $L^2(0,T)$. We now assume $r\notin \bQ$.

Suppose $\{ \varkappa_n: n\in \bN\} =\{ m^2/r,m^2r: m\in \bN\}$, listed in increasing order. 
Define 
$$
E(z)=\prod_{n=1}^\infty(1-z^2/\varkappa_n^2).
$$
We define the condensation index:
$$
c(\Lambda)=\limsup_{n\to \infty }\frac{\ln (|E'(\varkappa_n)|)}{\varkappa_n}.
$$
Since 
$$
\sin (\pi z)=\pi z \prod_{n\in \bN}(1-z^2/n^2),
$$
we get
$$
E(z)=-\frac{1}{\pi^4z^2}\sin(\pi \sqrt{rz})
\sinh (\pi \sqrt{rz})\sin(\pi \sqrt{z/r})\sinh (\pi \sqrt{z/r}),
$$
where we choose the branch of square root satisfying $\sqrt{-1}=i$. We compute
\begin{eqnarray*}
|E'(|\la_n^+|)| & = & \left (\frac{r^3}{2\pi^3n^5}\sinh (\pi n)\sinh (\pi n/r)\right )|\sin (\pi n/r)|=:A_n|\sin (\pi n/r)|\\
|E'(|\la_n^-|)| & = & \left (\frac{1}{2r^3\pi^3n^5}\sinh (\pi n)\sinh (\pi rn) \right )|\sin (\pi rn)|=:B_n|\sin (\pi n/r)|.
\end{eqnarray*}
Following \cite{ABGT},
$$
\frac{r^3}{2\pi^3n^5}\sinh (\pi )\sinh (\pi /r)\leq A_n\leq \frac{r^3}{2\pi^3n ^5}e^{\pi n} e^{\pi n/r},
$$
\beq 
\frac{1}{2\pi^3n^5r^3}\sinh (\pi )\sinh (\pi r)\leq B_n\leq \frac{1}{2\pi^3n ^5r^3}e^{\pi n} e^{\pi nr}.\label{AB}
\eeq 

Labeling 
$$
c (\Lambda^\pm )=\limsup_{n\to \infty }\frac{\ln (1/|E'(|\la_n^\pm|)|)}{|\la_n^\pm|},
$$
we have $c(\Lambda)=\max (c (\Lambda^+ ),c (\Lambda^- ))$, where by \rq{AB},
$$
c(\Lambda^+)=\limsup_{n\to \infty }\frac{-\ln (|\sin(\pi n/r)|)}{n^2/r},c(\Lambda^-)=\limsup_{n\to \infty }\frac{-\ln (|\sin(\pi r n)|)}{rn^2}
$$
\begin{prop}
\ 

A) for almost all $\rho >2$, $c(\Lambda)=0$.

B) Let $\tau \in [0,\infty ]$, there exists $\rho >2$ such that $c(\Lambda)=\tau $. Furthermore, the set of $\rho >2$ such that
$c(\Lambda)=\tau $ is dense in $(2,\infty).$
\end{prop}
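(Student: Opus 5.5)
The plan is to reduce both parts to Diophantine approximation properties of $r$, using the formulas for $c(\Lambda^\pm)$ displayed just above the statement. The map $\rho\mapsto r=(\rho+\sqrt{\rho^2-4})/2$ is a smooth increasing bijection from $(2,\infty)$ onto $(1,\infty)$, with inverse $\rho=r+1/r$. Both directions are locally Lipschitz. Hence null sets and dense sets correspond under this map, and it suffices to prove the statements for $r\in(1,\infty)\setminus\bQ$. Write $\|x\|$ for the distance from $x$ to the nearest integer. Then $|\sin(\pi x)|\asymp\|x\|$, more precisely $2\|x\|\le|\sin\pi x|\le\pi\|x\|$. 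Therefore
$$c(\Lambda^-)=\limsup_{n\to\infty}\frac{-\ln\|rn\|}{rn^2},\qquad c(\Lambda^+)=\limsup_{n\to\infty}\frac{-\ln\|n/r\|}{n^2/r}.$$
These formulas use the bounds \rq{AB} on $A_n,B_n$: the exponential factors are $e^{O(n)}=e^{o(n^2)}$, so they do not affect the limsup. One point to check is that the formula for $B_n$ should contain $|\sin(\pi rn)|$; I will use that version.

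\textbf{Part A.} By Khinchin's theorem (or just Borel--Cantelli), for almost every $r$ we have $\|rn\|\ge n^{-2}$ for all large $n$, since $\sum_n n^{-2}<\infty$. The same holds for almost every $r$ with $\|n/r\|$ in place of $\|rn\|$: apply the same statement to $1/r$, and note that inversion preserves null sets on $(1,\infty)$. For such $r$ we get $-\ln\|rn\|\le 2\ln n=o(n^2)$, so $c(\Lambda^\pm)=0$. Thus $c(\Lambda)=0$ for almost all $r$, hence for almost all $\rho$.

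\textbf{Part B.} The plan is to construct $r$ by continued fractions. Let $r=[a_0;a_1,a_2,\dots]$ with convergents $p_k/q_k$. The standard estimate is
$$\frac{1}{(a_{k+1}+2)q_k}\le\|q_kr\|\le\frac1{a_{k+1}q_k}.$$
Moreover, the denominators $q_k$ are the only $n$ giving best approximations, so the limsup defining $c(\Lambda^-)$ is attained along $n=q_k$ (Legendre/Lagrange best approximation). Hence
$$c(\Lambda^-)=\limsup_k\frac{\ln a_{k+1}}{r q_k^2}.$$
For $\tau\in(0,\infty)$, choose $a_{k+1}=\lfloor e^{\tau r q_k^2}\rfloor$ recursively. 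The snag is that $r$ itself appears in this formula, so the choice is circular. I would handle this by a fixed-point/continuity argument. Alternatively, I would replace $r$ by the known value $p_k/q_k$ up to $O(1/q_k^2)$, which changes the exponent only by $O(1)$ and thus does not affect the limit. For $\tau=\infty$, take $a_{k+1}=e^{q_k^3}$; for $\tau=0$, Part A already applies.

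It remains to control $c(\Lambda^+)$, which concerns $\|n/r\|$. The continued fraction of $1/r$ is obtained by shifting that of $r$: since $r>1$, we have $1/r=[0;a_0,a_1,\dots]$. So the convergent denominators of $1/r$ are essentially the numerators $p_k\approx rq_k$, with the same partial quotients. The corresponding quantity is therefore
$$\frac{\ln a_{k+1}}{p_k^2/r}\approx\frac{\ln a_{k+1}}{rq_k^2},$$
the same as for $c(\Lambda^-)$. Hence $c(\Lambda)=\tau$ for this $r$.

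Density follows because the first finitely many partial quotients $a_0,\dots,a_K$ can be prescribed arbitrarily without changing a limsup. Any interval in $(1,\infty)$ contains all irrationals whose continued fraction begins with some fixed finite string, so the set of $r$ with $c(\Lambda)=\tau$ is dense. Mapping back via $\rho=r+1/r$ gives density in $(2,\infty)$.

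The main obstacle is making the equality $c(\Lambda)=\tau$ exact rather than an inequality. This requires verifying that the non-convergent $n$ do not contribute more to the limsup, and handling the self-referential dependence of the growth condition on $r$. For the first issue, I would use that for $q_k\le n<q_{k+1}$ one has $\|nr\|\ge\|q_kr\|$ while $n^2\ge q_k^2$. This bounds the quotient for such $n$ by the one at $n=q_k$.
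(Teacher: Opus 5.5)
Your proposal is correct, and it takes a genuinely different route from the paper. The paper does not prove the Diophantine facts itself. It identifies $r=\sqrt d$ with the parameter of \cite{ABGT} and quotes Proposition 6.24 there (for a.e.\ $r$ both indices vanish) for part A. For part B it quotes Corollary 6.25 there (the $r$ with $l_2=\tau$ are dense), then transfers to $\rho$ via \rq{rr}. The passage from $l_2=\tau$ to $c(\Lambda)=\tau$ is justified only by the remark ``since $r>1$''.

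You instead give a self-contained elementary argument. Part A is Borel--Cantelli applied to $r$ and $1/r$. Part B is an explicit continued-fraction construction: Lagrange's best-approximation theorem reduces the $\limsup$ to convergent denominators, and the shift $1/r=[0;a_0,a_1,\dots]$ handles $c(\Lambda^+)$.

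That last step proves more than you need. Since $\|p_k/r\|$ lies between $1/((a_{k+1}+2)p_k)$ and $1/(a_{k+1}p_k)$, and $p_k^2/r=rq_k^2+O(1)$, your argument gives $c(\Lambda^+)=c(\Lambda^-)$ for \emph{every} irrational $r>1$. This is the cleanest justification of the paper's terse final step.

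The circularity you worry about also disappears with the recursion $a_{k+1}=\lfloor e^{\tau p_kq_k}\rfloor$. Since $|rq_k^2-p_kq_k|<q_k/q_{k+1}\le 1$, no fixed-point argument is needed.

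What each route buys: the paper's is short, but it depends on matching notation with \cite{ABGT}, and its displays contain slips. You correctly flagged $\sin(\pi n/r)$ in place of $\sin(\pi rn)$ in the $B_n$ formula; there is also the sign in the first definition of $c(\Lambda)$. Your route is longer but can be checked on its own and makes the equality $c(\Lambda)=\tau$ transparent.
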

Proof: we refer heavily to \cite{ABGT}. 
In their notation, $c(\Lambda)=\max (l_1/r,l_2)$. 
Comparing their Equation 6.16 with our formulas, with $r=\sqrt{d}$, 
$$
l_1=c(\Lambda^+)/r,\ l_2=c(\Lambda^-).
$$
According to 
\cite{ABGT}, Proposition 6.24, for almost all $r\in (0,\infty)$, we have $l_1=l_2=0$. Part A follows from \rq{rr}

We now prove Part B. Let $\tau \in [0,\infty]$. By \cite{ABGT}, Corollary 6.25,  the set of $r$ such that $l_2=\tau$ is dense in $(0,\infty )$. Since we have $r>1$, it follows for such $r$ that $c(\Lambda)=\tau.$ $\Box$

Finally, we clarify the relationship between the condensation index and biorthogonal families.
The following result, which we rewrite to  adapt to our notation, is found in \cite{ABGT}, Remark 4.3,
\begin{thm}
Let $r>1$ and 
suppose $c(\Lambda)<\infty .$
Then there exists a family $\{ q_m:m\in \bN \}$ of functions biorthogonal in $L^2(0,T)$ to $\{ e^{\varkappa_n^2(T-t)r}: n\in \bN \}$.
For any $\varepsilon > 0$  there exist positive constants 
$C_{1,\varepsilon}, \, C_{2,\varepsilon} $ such that
\beq 
C_{2,\varepsilon} \, e^{(c(\Lambda) - \varepsilon)\sqrt{\lambda_m}}
\;\leq\;
\|q_m\|_{L^2(0,T;C)}
\;\leq\;
C_{1,\varepsilon} \, e^{(c(\Lambda) + \varepsilon)\sqrt{\lambda_m}},
\quad \forall m \in \bN.
\label{bio}
\eeq 
\end{thm}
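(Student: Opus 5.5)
We construct the biorthogonal family by the classical Fourier--Laplace (Paley--Wiener) method, with the canonical product $E$ as the generating function. The exponents $\varkappa_n$ are distinct because $r\notin\bQ$. They are also real and grow like $n^2$, so $\sum_n 1/\varkappa_n<\infty$. This makes $E(z)=\prod_n(1-z^2/\varkappa_n^2)$ an even entire function of exponential type zero. The explicit product formula in terms of $\sin$ and $\sinh$ gives $\ln|E(z)|=O(\sqrt{|z|})$, and in particular $|E(x)|\le Ce^{C\sqrt{|x|}}$ on $\bR$.

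\textbf{Construction.} For each $m$ we set
$$E_m(z)=\frac{E(z)}{E'(\varkappa_m)(z-\varkappa_m)}.$$
This function vanishes at every $\varkappa_n$ with $n\neq m$ and equals $1$ at $z=\varkappa_m$. To obtain functions of exponential type $T/2$ that are square integrable on the real line, we multiply by a Beurling--Malliavin / Ingham-type multiplier $M_{\delta}$. We choose $M_\delta$ entire, of exponential type $\delta<T/2$, with:
\begin{itemize}
\item $|M_\delta(x)|\le C_\delta e^{-c\sqrt{|x|}}$ on $\bR$ with $c$ large enough to absorb the growth of $E$;
\item $|M_\delta(\varkappa_m)|\ge C_\delta^{-1}e^{-\varepsilon\varkappa_m}$.
\end{itemize}
A standard choice is a product $\prod_k \frac{\sin(a_k z)}{a_k z}$ with $\sum a_k\le \delta$. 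We then put
$$F_m(z)=E_m(iz)\,\frac{M_\delta(iz-i\varkappa_m)}{M_\delta(0)},$$
after rotating the variables appropriately. This $F_m$ lies in the Paley--Wiener space $PW_{T/2}$ and satisfies $F_m(i\varkappa_n)=\delta_{mn}$ up to the shift. By the Paley--Wiener theorem, $F_m$ is the Fourier--Laplace transform of some $q_m\in L^2(0,T)$ supported in $[0,T]$, after the shift $t\mapsto t-T/2$. The interpolation property is then exactly biorthogonality, $\int_0^T q_m(t)e^{-\varkappa_n t}\,dt=\delta_{mn}$. Rewriting in the variable $T-t$ and with the normalization of the statement gives the claimed biorthogonality.

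\textbf{Upper bound.} By Plancherel,
$$\|q_m\|_{L^2(0,T)}=\|F_m\|_{L^2(\bR)}\lesssim \frac{1}{|E'(\varkappa_m)|}\,\sup_x\frac{|E(ix)M_\delta(ix-i\varkappa_m)|}{1+|x-\varkappa_m|}.$$
The supremum is bounded by $C_\varepsilon e^{\varepsilon\varkappa_m}$: $E$ has order $1/2$ and $M_\delta$ decays. By the definition of the condensation index, for large $m$ we have $1/|E'(\varkappa_m)|\le e^{(c(\Lambda)+\varepsilon)\varkappa_m}$. This yields the upper estimate with $\sqrt{\lambda_m}$ read as $\varkappa_m$ in the paper's notation.

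\textbf{Lower bound.} This is the step we expect to be the main obstacle. Suppose $\{q_m\}$ is any biorthogonal family. Its minimal-norm element is $1/\mathrm{dist}(e^{-\varkappa_m t},\overline{\mathrm{span}}_{n\ne m}e^{-\varkappa_n t})$. So it suffices to bound this distance from above by $C e^{-(c(\Lambda)-\varepsilon)\varkappa_m}$ along the subsequence realizing the $\limsup$. We would follow \cite{ABGT} here. Their argument takes a finite combination $\sum_{n\in N(m)}a_ne^{-\varkappa_n t}$ over the indices clustered near $\varkappa_m$ (Lagrange interpolation on the condensation cluster) and shows, via a Müntz--Szász type identity, that the distance is comparable to $|E'(\varkappa_m)|$ up to $e^{\pm\varepsilon\varkappa_m}$ factors. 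The full statement is Remark 4.3 of \cite{ABGT}. Since our $\Lambda$ satisfies their hypotheses (real, increasing, $\sum 1/\varkappa_n<\infty$, finite condensation index), we would verify the hypotheses and invoke it for this direction rather than reprove it.
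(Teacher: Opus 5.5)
The paper gives no argument for this theorem; it quotes \cite{ABGT}, Remark 4.3, for both inequalities. For the lower bound you likewise defer to \cite{ABGT}. For the upper bound, your Paley--Wiener construction is a genuinely different, self-contained route. It only needs the $\varkappa_n$ to be real and distinct, with $\sum 1/\varkappa_n<\infty$ and $\log|E(ix)|=O(\sqrt{|x|})$.

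\textbf{The gap is in the upper bound.} As written, the construction breaks down exactly where the theorem lives: the exponential weights in $\varkappa_m$.

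First, your interpolating function is inconsistent.
\begin{itemize}
\item Since $E$ is even, $E_m(iz)=E_m(-\varkappa_n)=0$ at every $z=i\varkappa_n$, including $n=m$.
\item At the actual node $z=-i\varkappa_m$, the multiplier factor is $M_\delta(\varkappa_m-i\varkappa_m)/M_\delta(0)\neq 1$.
\item For real $z$, the argument $i(z-\varkappa_m)$ of $M_\delta$ is imaginary. There a type-$\delta$ function decaying on $\bR$ grows like $e^{\delta|z-\varkappa_m|}$, so $F_m\notin L^2(\bR)$.
\item If you meant the multiplier shifted to the node, i.e.\ evaluated at $z+i\varkappa_m$, then on $\bR$ it sits at distance $\varkappa_m$ from its decay line. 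Its size there is about $e^{\delta\varkappa_m}$, not $e^{o(\varkappa_m)}$.
\end{itemize}

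Second, and more seriously, moving the support from $[-T/2,T/2]$ to $[0,T]$ multiplies every moment $\int q\,e^{-\varkappa_n t}\,dt$ by $e^{-\varkappa_n T/2}$. Restoring $\delta_{mn}$ then costs a factor $e^{\varkappa_m T/2}$ in $\|q_m\|$. A type-$\delta$ multiplier normalized at $0$ cannot compensate for this. Your estimate omits both factors, so your setup yields at best $\|q_m\|\lesssim e^{(c(\Lambda)+T/2+\delta+\varepsilon)\varkappa_m}$. Since $\varepsilon$ is arbitrary in the claim, this loss is not cosmetic.

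\textbf{How to repair it.} Work in the Laplace variable: $\mathcal L q(w)=\int_0^T q(t)e^{-wt}\,dt$, with nodes $w=\varkappa_n$.
\begin{itemize}
\item Put the support next to the endpoint where the exponentials are not small. That is $[0,2\delta]$ for $e^{-\varkappa_n t}$, or $[T-2\delta,T]$ for the time-reversed family in the statement. Take $2\delta\le\min(T,\varepsilon)$.
\item Use the unshifted multiplier, normalized at the node:
\[
\mathcal L q_m(w)=\frac{E(w)}{E'(\varkappa_m)(w-\varkappa_m)}\,e^{-\delta(w-\varkappa_m)}\,\frac{M_\delta(w)}{M_\delta(\varkappa_m)},\qquad M_\delta(w)=\prod_k\frac{\sinh(a_k w)}{a_k w},\quad \sum_k a_k=\delta .
\]
\item Then $e^{\delta w}\mathcal L q_m$ has type $\delta$, $M_\delta(ix)=\prod_k \sin(a_kx)/(a_kx)$, and $M_\delta(\varkappa_m)\ge 1$. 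Hence $\|q_m\|\le C_\delta\, e^{\delta\varkappa_m}\varkappa_m^{-1/2}|E'(\varkappa_m)|^{-1}$, provided $\sup_x|E(ix)M_\delta(ix)|<\infty$.
\item For that supremum to be finite, $a_k\propto k^{-2}$ is not enough. It gives decay only $e^{-C\sqrt{\delta|x|}}$, which loses to $\log|E(ix)|\asymp\sqrt{|x|}$ when $\delta$ is small. Take instead, e.g., $a_k\propto 1/(k\log^2(k+1))$.
\item Your family depends on $\varepsilon$ through $\delta$, but the statement asks for one family valid for every $\varepsilon$. Pass to the minimal-norm biorthogonal family, i.e.\ the one in the closed span of the exponentials. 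Its $m$-th element has the smallest norm among all biorthogonal families, so it satisfies the bound for every $\varepsilon$.
\end{itemize}

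\textbf{The lower bound.} You argue only along the subsequence realizing the $\limsup$, which does not give the ``$\forall m$'' in the statement. Your reading is nonetheless the meaningful one. When $c(\Lambda)>0$, there are infinitely many $m$ at which $|\sin(\pi n/r)|$, resp.\ $|\sin(\pi rn)|$, stays bounded below. At those $m$ one has $|E'(\varkappa_m)|\ge 1$, so the minimal-norm family satisfies $\|q_m\|\le C_\delta e^{\delta\varkappa_m}$. The all-$m$ version can therefore only be met by padding $q_m$ with functions orthogonal to all the exponentials, which exist by M\"untz--Sz\'asz.

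You also do not need the cluster argument of \cite{ABGT} for this direction. Restricting to $(0,T)$ can only shrink distances, and the Cauchy-determinant formula on $(0,\infty)$ gives
\[
\mathrm{dist}_{L^2(0,T)}\le \frac{1}{\sqrt{2\varkappa_m}}\prod_{n\ne m}\frac{|\varkappa_n-\varkappa_m|}{\varkappa_n+\varkappa_m}\le \frac{\sqrt{\varkappa_m}}{2\sqrt 2}\,|E'(\varkappa_m)| .
\]
Hence every biorthogonal family satisfies $\|q_m\|\ge 2\sqrt2\,\varkappa_m^{-1/2}|E'(\varkappa_m)|^{-1}$ for all $m$. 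This is at least $e^{(c(\Lambda)-\varepsilon)\varkappa_m}$ along the $\limsup$ subsequence.
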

Applying this to the moment problem \rq{md1}-\rq{md4}, we immediately get 
\begin{cor}
Let $T>c(\Lambda)$.
Let  $\rho >2.$
Given $(u^0,u^1)\in X^{3}\times X^{1}$, there exists $f\in H_0^2(0,T)$ such that the solution $u$ to the system \rq{beamD}-\rq{initD} solves
$$u(x,T)=u_t(x,T)=0,
$$
and there exists a constant $C$ depending only on $\rho$ such that
$$\| f''\|_{L^2(0,T)}\leq C(\| u^0\|_{X^{3}}+\|u^1\|_{X^{1}}).$$
\end{cor}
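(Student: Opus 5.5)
We will follow the moment-method argument already used for part A. The setup is the same: reduce to $v=u-U$ with $U=\frac{x}{\pi}f(t)$, split $v=v^0+v^f$, and use $f(T)=f'(T)=0$. This reduces null controllability to the moment problem \rq{md1}-\rq{md4}. The unknown is $h:=f''\in L^2(0,T)$, and the moments are taken against the family
$$\{1,\ t\}\cup\{e^{-m^2(T-s)/r}: m\in\bN\}\cup\{e^{-rn^2(T-s)}: n\in\bN\}.$$
Since $r\notin\bQ$ (otherwise part B of Theorem \ref{bcD} shows failure), the exponents $\varkappa_k$ are distinct. Reversing time $s\mapsto T-s$ turns these into $e^{-\varkappa_k t}$ together with $1$ and $T-t$, which are the exponentials $e^{0\cdot t}$ and $te^{0\cdot t}$. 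We therefore work with the exponent set $\{0,0\}\cup\{\varkappa_k\}$. Adding a double zero exponent changes $E$ only by a factor $z^2$, so the condensation index is unchanged. We then invoke the preceding theorem from \cite{ABGT}, in the version allowing a finite multiplicity at $0$; in \cite{ABGT} this is handled by the same construction. This yields a biorthogonal family $\{q_0,q_{0,2}\}\cup\{q_k\}$ whose norms satisfy $\|q_k\|\le C_\varepsilon e^{(c(\Lambda)+\varepsilon)\sqrt{\varkappa_k}}$.

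Next we set $h=\sum_k \zeta_k q_k$, with zero coefficients on the two functions dual to $1$ and $t$, so that \rq{md3},\rq{md4} hold. Here $\zeta_k$ is $\zeta_m^1$ or $\zeta_n^2$ according to whether $\varkappa_k=m^2/r$ or $rn^2$. The key estimate is the decay of $\zeta_k$. From \rq{tc1},\rq{tc2},\rq{z} and the formulas for $c^1_n,c^2_n$, we get
$$\zeta^1_n=-\frac{(\la_n^+-\la_n^-)c_n^2e^{\la_n^-T}}{x_n},\qquad \zeta^2_n=\frac{(\la_n^+-\la_n^-)c_n^1e^{\la_n^+T}}{x_n},$$
up to sign. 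Using $|x_n|\asymp 1/n$, $|\la_n^\pm|\asymp n^2$ and $|c_n^j|\lesssim |u^0_n|+n^{-2}|u^1_n|$, this gives
$$|\zeta_k|\lesssim n^{3}\bigl(|u_n^0|+n^{-2}|u_n^1|\bigr)e^{-n^2T/r}.$$
Note the cross-pairing: $\zeta^1_n$ carries $e^{-rn^2T}$, even better. Since $\sqrt{\varkappa_k}$ is comparable to $n$ (namely $n/\sqrt r$ or $n\sqrt r$), the worst-case term behaves like
$$n^3\bigl(|u_n^0|+n^{-2}|u^1_n|\bigr)\exp\big((c(\Lambda)+\varepsilon)\sqrt r\, n-n^2T/r\big).$$
Here Gaussian decay in $n$ beats the linear exponential growth for any $T>0$. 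Summing with Cauchy--Schwarz then gives convergence in $L^2(0,T)$ and the bound $\|h\|\le C(\|u^0\|_{X^3}+\|u^1\|_{X^1})$, with $C$ depending on $\rho$ and $T$.

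The main obstacle is the mismatch of scalings. The theorem as quoted measures growth by $e^{c\sqrt{\lambda_m}}$, with the biorthogonal family normalized for exponents indexed via $\varkappa_n^2$. One must check that in our normalization the exponents are $\varkappa_k\asymp n^2$, so that the bound is $e^{c\, n}$. If instead the growth were of order $e^{c\varkappa_k}$, as the definition of $c(\Lambda)$ with denominator $\varkappa_n$ suggests, the comparison would be with $e^{-\varkappa_k T}$. Convergence would then require exactly $T>c(\Lambda)$, which explains the hypothesis of the corollary. We will carry out the bookkeeping in this second, conservative form, $\|q_k\|\lesssim e^{(c(\Lambda)+\varepsilon)\varkappa_k}$. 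We pick $\varepsilon<T-c(\Lambda)$, and we use that every $\zeta_k$ carries a factor $e^{-\varkappa_k T}$ up to polynomial weights; this is the cross-pairing above, where $\zeta^2_n$ decays like $e^{-n^2T/r}$ and $\zeta^1_n$ even faster. The remaining step is to recover $f$ by integrating $h$ twice from $0$. The conditions $f(0)=f'(0)=0$ are built in, while $f(T)=f'(T)=0$ follow from \rq{md3},\rq{md4}, so $f\in H_0^2(0,T)$. Well-posedness then follows from Proposition \ref{wpD}.
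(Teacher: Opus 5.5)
You follow the same route as the paper: reduce to the moment problem \rq{md1}--\rq{md4} and use the biorthogonal family of the quoted theorem. You are also right to use the normalization $\|q_k\|\lesssim e^{(c(\Lambda)+\varepsilon)\varkappa_k}$ rather than the printed $\sqrt{\lambda_m}$ form. However, the decisive convergence step does not follow from what you wrote. You attach $\zeta_n^2$ to the exponent $\varkappa_k=rn^2$, and you compute that it carries only $e^{\lambda_n^+T}=e^{-n^2T/r}$. Then
\[
|\zeta_n^2|\,\|q_k\|\lesssim n^3\bigl(|u_n^0|+n^{-2}|u_n^1|\bigr)\exp\Bigl(\bigl((c(\Lambda)+\varepsilon)r-T/r\bigr)n^2\Bigr),
\]
which is summable only for $T>r^2c(\Lambda)$. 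Since $r>1$, this is strictly stronger than $T>c(\Lambda)$ whenever $c(\Lambda)>0$. So your sentence ``every $\zeta_k$ carries a factor $e^{-\varkappa_kT}$ \dots\ this is the cross-pairing'' contradicts your own bookkeeping. Under a cross-pairing, the moment attached to the larger exponent $rn^2$ has the weaker decay.

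The missing observation is that the cross-pairing is an artifact. Definition \rq{z} and the labels in \rq{md1},\rq{md2} are mismatched: the combination $\lambda_n^+\gamma_n^1-\gamma_n^2$ isolates the $e^{\lambda_n^-}$ moment, not the $e^{\lambda_n^+}$ one. Work directly from \rq{md1'}, \rq{md2'}, \rq{tc1}, \rq{tc2}, using $\lambda_n^+-\lambda_n^-=2i\alpha_n$:
\begin{align*}
-x_n\int_0^Tf''(s)e^{\lambda_n^+(T-s)}\,ds&=\lambda_n^-\gamma_n^1-\gamma_n^2=c_n^1(\lambda_n^+-\lambda_n^-)e^{\lambda_n^+T},\\
-x_n\int_0^Tf''(s)e^{\lambda_n^-(T-s)}\,ds&=\lambda_n^+\gamma_n^1-\gamma_n^2=-c_n^2(\lambda_n^+-\lambda_n^-)e^{\lambda_n^-T}.
\end{align*}
So the pairing is diagonal, as it should be. After dividing by $e^{\lambda_n^\pm T}$, these say that $\int_0^Tf''(s)e^{-\lambda_n^\pm s}\,ds$ is a fixed multiple of $c_n^1$, respectively $c_n^2$.

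Hence the moment attached to each $\varkappa_k$ is bounded by $Cn^3\bigl(|u_n^0|+n^{-2}|u_n^1|\bigr)e^{-\varkappa_kT}$. Choosing $\varepsilon<T-c(\Lambda)$, the terms of $\sum_k|\zeta_k|\,\|q_k\|$ are bounded by $n^3\bigl(|u_n^0|+n^{-2}|u_n^1|\bigr)e^{-(T-c(\Lambda)-\varepsilon)\varkappa_k}$, and your Cauchy--Schwarz argument then goes through.

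The rest of your plan is fine. Adjoining the double exponent $0$ for \rq{md3},\rq{md4} works because the enlarged family stays minimal (M\"untz condition), and a finite-rank correction of the $q_k$ changes their norms only by a constant factor. Integrating $h$ twice then recovers $f$.
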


Parts C,D of Theorem \ref{bcD} follow immediately from this corollary.

\subsection{Well-posedness and control for Neumann case}

In this section, we assume for simplicity of presentation that
	$\al =1 $ and $\rho <2.$ We write $X^p$ for $X_N^p$ for simplicity.
Consider 
\begin{eqnarray*}
u_{tt}+A_N^2 u+\rho A_N u_t& = & 0\label{beamn1}\\
u_x(0,t)=u_{x}(\pi ,t) & = & g(t)\label{bc1n1}\\
u_{xxx}(0,t)=u_{xxx}(\pi,t) & = & 0\label{bc3n1}\\
u(x,0)=u^0(x),\ u_t(x,0)& = & u^1(x).\label{initn1}
\end{eqnarray*}

\noindent{\bf Proof of Proposition \ref{wpn}:}

	Let $U(x,t)=xg(t)$, and $v(x,t)=u(x,t)-U(x,t).$
	Then $v$ satisfies
	\begin{eqnarray}
		v_{tt}+\Delta^2 v+\rho (-\Delta) v_t & = & -g''(t)x,\label{vn1}\\
		v_x(0,t) =v_x(\pi ,t)=v_{xxx}(0,t)=v_{xxx}(\pi ,t)& = & 0\\
		v(x,0) & =& u^0(x)-xg(0)\\
		v_t(x,0) & = & u^1(x)-xg'(0).\label{vn4}
	\end{eqnarray}
By hypothesis, we have 
	$$
	g(0)=g'(0)=0.
	$$

	We write $v=v^0+v^g$, where $v^0$ is the solution of 
	\rq{vn1}-\rq{vn4} with $g=0$, and $v^g$ is the solution with $u^0=u^1=0.$ To solve for $v^g$,
	we first we note the following Fourier expansion
		$$
		x=x_0/\sqrt{\pi}+\sum_1 x_m\sqrt{\frac{2}{\pi}}\cos ( m x)
		:=\frac{\pi}{2}+\sqrt{\frac{2}{\pi}}\sum_1 \frac{(-1)^m-1}{m^2}\cos ( m x)
        .$$ 
In what follows, it will be convenient to denote the normalized eigenfunctions by 
		$\{ \f_n(x):n\geq 0\}$.
		Set $v^g=\sum a_n(t)\f_n(x)$. Then 
		\rq{vn1} implies
		$$
		\sum_0^\infty (a_n''+\rho a_n'n^{2}+a_nn^4   )\f_n(x)=-\sum_0^\infty g''(t)x_n\f_n(x),
		$$
		hence
		\beq
		a_0''(t)=-g''(t)x_0/\sqrt{\pi};\mbox{ and for } n\geq 1,\  
        a_n''+\rho n^{2}a_n'+n^4a_n=-g''(t)x_n,\ a_n(0)=a_n'(t)=0.\label{an}
		\eeq
		In what follows, it will be convenient to 
		set
		$$\beta_n=\frac{-\rho n^{2}}{2}, \ 
		\al_n=n^2\frac{\sqrt{4-\rho^2}}{2},\mbox{ so }\la_n^{\pm}=\beta_n\pm i\al_n.
		$$
		We next find $v^g(x,t)=\sum_na_n(t)\f_n(x)$ using variation of parameters in \rq{an}.
		We have the  Wronskian equalling
		$-2i\al_ne^{-2\beta_nt}$.
		Hence, for $n\geq 1$,
		\begin{eqnarray}
			a_n(t)
			& =& \frac{x_n}{2i\al_n}
			\int_0^t\big ( g''(s)\big )
			(e^{\la_n^+(t-s)}-e^{\la_n^-(t-s)})ds,
			\ n\in \bN, \label{vf2n} \\
            a_n'(t)
			& =& \frac{x_n}{2i\al_n}
			\int_0^t\big ( g''(s)\big )
			(\la_n^+e^{\la_n^+(t-s)}-\la_n^-e^{\la_n^-(t-s)})ds,
			\ n\in \bN\, \label{vfn}
		\end{eqnarray}
while
\beq
a_0(t)=-\frac{x_0}{\sqrt{\pi}}\int_{s=0}^t\int_{r=0}^sg''(r)dr; \ a_0'(t)=-\frac{x_0}{\sqrt{\pi}}\int_{r=0}^tg''(r)dr.\label{vf1n}
\eeq
Recall that 
$$
|x_n|\asymp 1/n^2,\ |\al_n| \asymp n^2\asymp |\la_n |, \mbox{ for } n \neq 0.
$$
Since $\{ e^{\la_n^\pm t}\}$ forms a Bessel sequence on $L^2(0,T)$, we get 
$$
t\mapsto (\{ n^4 a_n(t)\},\{ n^2a_n'(t)\})
$$
is continuous from $\bR^+$ to $\ell^2 \times \ell^2$. Hence
$$
v^g\in C(0,T;X^4)\cap C^1(0,T;X^2).
$$
It is easy to see that 
$$
v^0\in C(0,T;X^4)\cap C^1(0,T;X^2), \mbox{ and }U \in C^1(0,T;X^1),
$$
so  Proposition \ref{wpn}  follows from 
$u=v^g+v^0+U$.$\Box$

\ 

Proof of theorem: we adopt the notation of proof of the proposition. Our construction will have $g\in H_0^2(0,T)$, so 
$g(T)=g'(T)=0,$
so $U(x,T)=U_t(x,T)=0$.
Thus null controllability for \rq{beamn1}-\rq{initn1} is equivalent to the existence of $g$ such that 
	\beq 
	v^g(x,T)=-v^0(x,T); \ v_t^g(x,T)=-v^0_t(x,T) .
	\label{cont2n}
	\eeq
We derive the associated moment problem. First, we express the ``free wave", $v^0$, as a Fourier series. Suppose for $j=0,1$, the initial conditions have Fourier expansion $u^j(x)=\sum_{n=1}^\infty u_n^j\f_n(x)$. Then
		from the above, 
		$$v^0(x,t)=\sum_{n\geq 1}\big ( c^1_ne^{\la_n^+t}+c^2_ne^{\la_n^-t}\big )\f_n(x),$$
		with 
		$$c_n^2=\frac{\la_n^+u_n^0-u_n^1}{\la_n^+-\la_n^-},
		c^1_n=u_0^n-c^2_n, n\geq 1.
		$$
We use the following notation: 
		\beq \label{tc1n}
		-v^0(x,T)=-c_0^+\f_0(x)-c_0^-T\f_0(x)+
		\sum_{n\in \bN}\big (- c^1_ne^{\la_n^+T}
		-c_n^2e^{\la_n^-T}  \big )
		\f_{n}(x)=:\sum_{n\geq 0}\ga_n^1\f_{n}(x),
		\eeq 
		\beq \label{tc2n}
		-v^0_t(x,T)=-c_0^-T\f_0+\sum_{n\in \bN}\big (- c^1_n\la_n^+e^{\la_n^+T}
		-c_n^2\la_n^-e^{\la_n^-T}  \big )
		\f_{n}(x)=:\sum_{n\in \bN}\ga_n^2\f_{n}(x).
		\eeq

By \rq{cont2n}, \rq{vf2n},\rq{vfn},  \rq{tc1n}. \rq{tc2n}, we get 
\begin{eqnarray}
\ga_n^1 & =& \frac{x_n}{2i\al_n}\int_0^T\big ( g''(s)\big )(e^{\la_n^+(T-s)}-e^{\la_n^-(T-s)})ds,\ n\in \bN, \label{mn1'} \\
 \ga_n^2& =& \frac{x_n}{2i\al_n}
			\int_0^T\big ( g''(s)\big )
			(\la_n^+e^{\la_n^+(T-s)}-\la_n^-e^{\la_n^-(T-s)})ds,
			\ n\in \bN. \label{mn2'}
		\end{eqnarray}
With $\zeta_n^j$ given by \rq{z},
we rewrite \rq{mn1'},\rq{mn2'} as 
\begin{eqnarray}
\zeta_n^1 & =& \frac{x_n}{2i\al_n}\int_0^T g''(s)g )e^{\la_n^+(T-s)}ds,\ n\in \bN, \label{mn1} \\
 \zeta_n^2& =& \frac{x_n}{2i\al_n}
			\int_0^T g''(s)
			e^{\la_n^-(T-s)}ds,
			\ n\in \bN, \label{mn2}
		\end{eqnarray}       
while the requirements $g(T)=g'(T)=0$ imply
\begin{eqnarray}
0 &= & \int_0^T g''(s)ds,\label{mn3} \\
0 &= & \int_0^T sg''(s)ds.\label{mn4} 
\end{eqnarray}	

Our moment problem is thus \rq{mn1}-\rq{mn4}.
We can then solve for  $g''$, hence for $g$, mimicking the arguments used in the Dirichlet case; the details are left to the reader.

\subsection{Biorthogonal Functions}\label{app}

An important part of our proof of null controllability is to construct  suitable sets of biorthogonal functions associated to $\{ e^{i\la_n t}:n\in \bN\}$. For this, we will adapt arguments found in 
 \cite{AEI}, which is a slight extension of \cite{AIS}, which in turn generalizes a result in \cite{H}.  

Let $\La$ be a discrete subset $\bC^+$. 
We introduce a function $\nu :[0,\infty)\mapsto [0,\infty)$
which describes the density of $\Lambda$ 
$$ 
\#\{ \la_n\in \Lambda\setminus \{\la_k,\la_{-k}\}  :  |\la_n -\la_k| < s\} \leq \nu (s), \forall k.
$$
Suppose $\Lambda$ satisfies 
\beq
\nu(s)=0, \ s<R_0,\label{separ}
\eeq
 for a positive $R_0$, which is equivalent to $\La$ satisfying a uniform gap condition. 
Suppose further the following estimates hold: 
there exist positive constants, $C_1,C_2$  such that 
\beq \label{nu}
C_0r^{1/2}\leq \nu(r)\leq C_1r^{1/2},\ \forall r>2R_0.
\eeq 

Recall
$$\< f,g\>=\int_0^Tf(t)\overline{g(t)}dt,
$$
where the bar denotes complex conjugation.
\begin{prop}\label{prop2}
Let $T>0$. 
Given $\La^i=\{ \la_k: k\in \bZ\}.$
Suppose there exists a function $\nu (r)$ satisfying  the estimates   \ref{separ} and \ref{nu}.
 Then there exists a family of functions $\{  g_{m,j}(t); m\in \bK ,\ j=1,2 \} $ in $L^2(0,T)$ satisfying 
$$ \< g_{m,1},te^{i\la_kt}\>=0,\  \< g_{m,1},e^{i\la_kt}\> =\delta_{m,k},\ \< g_{m,2},te^{i\la_kt}\>=\delta_{m,k},\  \< g_{m,2},e^{i\la_kt}\> =0 .
$$
Furthermore, there exist positive constants $C_2,C_3$ depending  only on $R_0,T,C_0,C_1$ such for $j=1,2,$
\beq
\| g_{m,j}\|_{L^2(0,T)}\leq C_2 e^{C_3(\Im (\la_m))^{\varkappa}},\label{boe}
\eeq
where $\varkappa$ is defined in \ref{nu}.
\end{prop}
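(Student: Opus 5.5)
We will construct the biorthogonal family with the classical complex-analytic method of \cite{H}, \cite{AIS}, \cite{AEI}: build an entire function of exponential type vanishing to second order on $\La$, divide out one root, multiply by a multiplier, and take the inverse Fourier transform via Paley--Wiener. First, after a rotation and shift we may assume all $\la_k$ lie in a half-plane $\Im z\geq \delta>0$ (or on a suitable ray). We define the canonical product
$$
F(z)=\prod_{k}\Bigl(1-\frac{z^2}{\la_k^2}\Bigr)
$$
(or the one-sided analogue). The density bound \rq{nu}, $\nu(r)\asymp r^{1/2}$, gives counting function $n(r)\lesssim r^{1/2}$. Hence $F$ has order $1/2$, so it is of exponential type zero, and on the real axis $\log|F(x)|\leq C|x|^{1/2}$. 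We then form $F^2$, which has double zeros at every $\la_k$. For fixed $m$ we set
$$
\Phi_{m,2}(z)=\frac{F(z)^2}{(z-\la_m)^2 F'(\la_m)^2},
$$
and take $\Phi_{m,1}$ to be the analogous function with a linear correction term $a_m(z-\la_m)$ chosen to kill the derivative condition at $\la_m$. Interpolation of the pairs $\<g,e^{i\la_kt}\>$, $\<g,te^{i\la_kt}\>$ then corresponds to prescribing $\Phi(\la_k)$ and $\Phi'(\la_k)$ after Fourier transform.

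Second, we must make $\Phi_{m,j}$ square integrable on $\bR$ while keeping exponential type $\leq T/2$. We multiply by a multiplier $M(z)$ of small exponential type, for instance Ingham--Beurling--Malliavin type, with $|M(x)|\leq e^{-C|x|^{1/2}}$ on $\bR$ dominating the growth of $F^2$. An explicit choice is $M(z)=\prod_n \frac{\sin(\epsilon z/n^2)}{\epsilon z/n^2}$ suitably tuned, or equivalently the construction in \cite{AIS}. We normalize so that $M(\la_m)=1$ after division by $M(\la_m)$. Paley--Wiener then yields $g_{m,j}\in L^2(-T/2,T/2)$, which we shift to $(0,T)$, with the required biorthogonality.

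Third, the estimate \rq{boe} comes from bounding $\|\Phi_{m,j}M\|_{L^2(\bR)}$. This reduces to a lower bound for $|F'(\la_m)|$ and an upper bound for $1/|M(\la_m)|$. The upper bound for $|M|$ at $\la_m$ contributes $e^{C(\Im\la_m)^{1/2}}$, consistent with $\varkappa=1/2$ in \rq{nu}. The lower bound $|F'(\la_m)|\geq c\,e^{-C|\la_m|^{1/2}}$ follows from the gap condition \rq{separ} together with the counting estimate: we split the product over $|\la_k-\la_m|<|\la_m|/2$ and the rest. Near factors are bounded below by $R_0/|\la_k|$, and their number is controlled by $\nu$. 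Far factors are handled by comparing with $\log|F|$ via Jensen-type integrals of $n(r)/r$.

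The main obstacle is this lower bound on $|F'(\la_m)|$ uniformly in $m$, together with handling points where $\Im\la_m$ is small relative to $|\la_m|$. We will first attempt the direct factor-splitting argument of \cite{AIS}, using $\nu(s)\leq C_1s^{1/2}$ to bound $\sum_{|\la_k-\la_m|<s}\log(|\la_m-\la_k|/|\la_k|)$. Constants will depend only on $R_0,T,C_0,C_1$, as claimed.
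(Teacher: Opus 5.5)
The paper does not actually prove Proposition~\ref{prop2}. It defers to the generating-function, multiplier and Paley--Wiener constructions of \cite{H}, \cite{AIS}, \cite{AEI}, which is the route you sketch. The substantive gap is the form of \rq{boe}: the bound is in terms of $\Im\lambda_m$ and no sector condition is assumed, but everything in your plan is in terms of $|\lambda_m|$.

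Factor-splitting gives $|F'(\lambda_m)|\ge c\,e^{-C|\lambda_m|^{1/2}}$. Even that requires treating the near factors through the Stieltjes sum $\int_{R_0}^{|\lambda_m|/2}\log(s/|\lambda_m|)\,d\nu(s)$; bounding each near factor below by $R_0/|\lambda_k|$ and counting them costs an extra $\log|\lambda_m|$ in the exponent. On the multiplier side, your claim that $M$ contributes only $e^{C(\Im\lambda_m)^{1/2}}$ is false near the real axis. If $|M(x)|\le e^{-C|x|^{1/2}}$ on $\bR$, the Poisson integral gives $|M(\lambda_m)|\le e^{-c|\lambda_m|^{1/2}}$ whenever $\Im\lambda_m$ stays bounded.

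So the most your argument can deliver is $\|g_{m,j}\|\le C_2e^{C_3|\lambda_m|^{1/2}}$, which is \rq{boe} only when $\Im\lambda_m\ge c|\lambda_m|$. That sector condition does hold in both uses in the paper: $\Im\lambda_k=\rho k^2/2$ and $|\lambda_k|=k^2$ for $\rho<2$, and $\lambda_m=im^2$ for $\rho=2$. For the application your plan is adequate, but it does not prove the proposition as stated. For $\lambda_k=\pm k^2+i\delta$, the right side of \rq{boe} is bounded in $m$. Yet $|F'(\lambda_m)|$ is exponentially large there, and your separate bounds on $\sup_{\bR}|F^2M|$ and on $|F'(\lambda_m)^2M(\lambda_m)|$ give only $e^{Cm}$. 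You must estimate that ratio as a whole, exploiting the cancellation between numerator and denominator. Naming ``small $\Im\lambda_m$'' as the obstacle and then proposing factor-splitting does not supply this idea.

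There are also concrete errors in the interpolation step.

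\emph{The definition of $\Phi_{m,2}$.} Your $\Phi_{m,2}=F^2/\bigl((z-\lambda_m)^2F'(\lambda_m)^2\bigr)$ equals $1$ at $\lambda_m$, so it cannot give $\<g_{m,2},e^{i\lambda_mt}\>=0$. You must divide by $z-\lambda_m$ only once, leaving a simple zero with unit derivative. Also, with $\hat g(z)=\int_0^Tg(t)e^{-izt}\,dt$ one has $\<g,e^{i\lambda_kt}\>=\hat g(\bar\lambda_k)$ and $\<g,te^{i\lambda_kt}\>=i\hat g'(\bar\lambda_k)$, so the zeros belong at $\bar\lambda_k$.

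\emph{The translation to $(0,T)$.} Translating from $(-T/2,T/2)$ to $(0,T)$ does not preserve biorthogonality. If $g(t)=h(t-T/2)$, then $\hat g(z)=e^{-izT/2}\hat h(z)$. The value at $\bar\lambda_m$ is multiplied by a factor of modulus $e^{-T\Im\lambda_m/2}$. The derivative $\hat g'$ acquires the term $-\tfrac{iT}{2}e^{-izT/2}\hat h$, which destroys $\<g_{m,1},te^{i\lambda_mt}\>=0$ unless $a_m$ is computed for $e^{-izT/2}\Phi_{m,1}$.

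\emph{What you need from $M$.} Renormalizing costs $e^{T\Im\lambda_m/2}$, which is not of the form \rq{boe}. It is absorbed only if $|M(\bar\lambda_m)|\ge e^{T\Im\lambda_m/2-C|\lambda_m|^{1/2}}$, i.e.\ $M$ must nearly attain its full type $T/2$ in the direction of the points. That lower bound is the estimate you need from the multiplier, not an $e^{C(\Im\lambda_m)^{1/2}}$ bound on $1/|M(\lambda_m)|$. In a sector it holds for a high power of $\prod_n\frac{\sin(az/n^2)}{az/n^2}$ with $a$ small. This is also what ``suitably tuned'' has to mean: a single such product decaying like $e^{-C|x|^{1/2}}$ with $C$ large has type exceeding $T/2$ for small $T$.
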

We apply this result as follows:

\noindent{\bf Case 1: $\rho <2$.} In this case,
we set
 $$
\la_k =
\left \{ 
\begin{array}{cc}
-i\la^+_k, & k>0,\\
-i\la^-_{|k|}, & k<0,
\end{array}\right .
 $$
We then define $\la_0=0$, and $\Lambda=\{ \la_m: m\in \bZ\}$. Then $\Lambda$ satisfies \rq{separ} and \rq{nu}. It follows that the set of functions in $L^2(0,T)$ 
$$\{ t\}\cup \{ e^{i\la_mt}: m\in \bZ\}$$
has an associated biorthogonal family
$$\{g_{0,2}(t)\}\cup \{ g_m(t): m\in \bZ\}$$
satisfying the \rq{boe}.

\noindent{\bf Case 2: $\rho =2$.}
In this case, recall $\la_n^-=\rho_n^+$ for all $n\geq 1$. Letting $\la_0=0$ and $\la_m=-i\la_m$ for $m\geq 1$,
we set $\La=\{  \la_m: m\geq 0\}$, and set of functions
$$ \{ e^{i\la_mt}: m\geq 0\}.$$
Then there exists a family of functions $\{  g_{m,j}(t); m\geq 0 ,\ j=1,2 \} $ in $L^2(0,T)$ satisfying 
$$ \< g_{m,1},te^{i\la_kt}\>=0,\  \< g_{m,1},e^{i\la_kt}\> =\delta_{m,k},\ \< g_{m,2},te^{i\la_kt}\>=\delta_{m,k},\  \< g_{m,2},e^{i\la_kt}\> =0 ,
$$
and satisfying \rq{boe}.

\section{Conclusions}
We considered the Euler-Bernouilli beam, for either Dirichlet or Neumann control applied at one end. To prove continuous dependance of the velocity on time, we assume the control and energy spaces are regular. For these spaces, we prove well-posedness. For small damping parameter, $\rho \leq 2$, we then prove null-controllability for Dirichlet control; for Neumann control, we must assume the energy spaces are orthogonal constants. For $\rho >2$, we apply number theoretic results to prove that 
null controllability holds in some cases, but not others. 

We list several possible extensions to this work. First, it should be possible to extend this study to higher dimensions. Second, one can consider the beam equation with a spectral fractional damping term, $A^\alpha u_t$, with $\al \in (0,1)$; this will be addressed in \cite{AEI2},\cite{AEI3}. Third, it is natural to consider the beam equation with non-constant coefficients, but this will require different methods. Finally, the methods in this work should also apply to higher order controls such as moment control, $u_{xx}(\pi,t)=f(t)$, and shear control, $u_{xxx}(\pi,t)=f(t)$.

\section{Author contributions}
Conceptualization, S.A. and J.E.; Methodology, S.A. and J.E.; Software, not applicable; Validation, S.A. and J.E.; Formal Analysis, S.A. and J.E.; Investigation, S.A. and J.E.; Resources, S.A. and J.E.; Data Curation, S.A. and J.E.; Writing – Original Draft Preparation, J.E.; Writing – Review and Editing, S.A. and J.E..; Visualization, not applicable; Supervision, S.A. and J.E.; Project Administration, S.A. and J.E.; Funding Acquisition, not applicable.

\section{Funding}

The research of S.A. was  supported  in part by the National Science Foundation, grant DMS 2308377, and by the Ministry of Education and Science of the Russian Federations part of the program of the Moscow Center for Fundamental and Applied Mathematics under the Agreement No. 075-15-2025-345.

\section{Acknowledgments}

Sergei Ivanov, who was a very active collaborator in our project of studying the structurally damped beam, passed away the past February. We expect he would have been a coauthor of this paper if he were alive.

\section{Conflicts of Interest}
The authors declare no conflicts of interest.

\end{document}